\newtheorem{theorem}{Theorem}[section]
\newtheorem{lemma}[theorem]{Lemma}
\newtheorem{hyp}[theorem]{Hypotheses}
\newtheorem{hypo}[theorem]{Hypothesis}
\newtheorem{prop}[theorem]{Proposition}
\newtheorem{cor}[theorem]{Corollary}
\theoremstyle{definition}
\theoremstyle{remark}
\newtheorem{remark}[theorem]{Remark}
\numberwithin{equation}{section}
\newcommand{\one}{\mathbbm{1}}
\newcommand{\R}{\mathbb{R}}
\newcommand{\Rd}{\mathbb{R}^d}
\newcommand{\N}{\mathbb{N}}
\newcommand{\cA}{\mathcal{A}}
\newcommand{\cT}{\mathcal{T}}
\newcommand{\cL}{\mathcal{L}}
\newcommand{\cG}{\mathcal{G}}
\newcommand{\ph}{\varphi}
\newcommand{\ep}{\varepsilon}
\begin{document}

\title[Strong convergence of solutions to Kolmogorov equations]{Strong convergence of solutions to
nonautonomous Kolmogorov equations}


\author[L. Lorenzi]{Luca Lorenzi}
\address{L.L. \& A.L.: Dipartimento di Matematica e Informatica, Universit\`a degli Studi
di Parma,  Parco Area delle Scienze 53/A, I-43124 Parma, Italy.}
\email{luca.lorenzi@unipr.it, alessandra.lunardi@unipr.it}
\thanks{This work has been supported by the M.I.U.R. Research Project PRIN 2010-2011
``Problemi differenziali di evoluzione: approcci deterministici e stocastici e loro interazioni''. The first
author wishes to thank the Department of Mathematics of the Karlsruhe Institute of Technology for the kind
hospitality during his visit.}

\author[A. Lunardi]{Alessandra Lunardi}

\author[R. Schnaubelt]{Roland Schnaubelt}
\address{R.S.: Department of Mathematics, Karlsruhe Institute of Technology, D-76128 Karlsruhe, Germany.}
\email{schnaubelt@kit.edu}

\subjclass[2010]{Primary {35K10; Secondary 35K15, 35B40}}

\date{}

\dedicatory{}

\commby{}

\begin{abstract}
We study  a class of nonautonomous, linear,
parabolic equations with unbounded coefficients on $\R^{d}$
which admit an evolution system of measures.
It is shown that the solutions of these equations converge
to constant functions as $t\to+\infty$. We further establish
the uniqueness of the tight evolution system of measures
and treat the case of converging coefficients.
\end{abstract}

\keywords{Nonautonomous parabolic problem, unbounded coefficients, invariant measures, uniqueness, convergence,
evolution semigroup, Green's function, gradient estimates.}

\maketitle


\bibliographystyle{amsplain}



\section{Introduction}
In this paper we investigate the asymptotic behaviour of a class of nonautonomous parabolic
partial differential equations of second order in $\R^d$
with unbounded coefficients.  We establish that the solutions
converge to constant functions as the time  $t$ tends to $+\infty$. These limits exist both locally
uniformly and in $L^p$ spaces with respect to a time-varying family of (invariant) measures.
Such convergence results have been known before only for special cases,
where different more specific methods could be employed, see \cite{ALL, DaPLun07Orn, GeisLun09, LorLunZam10}.

The analysis of nondegenerate elliptic operators with unbounded coefficients goes back to the second
half of last century with the pioneering papers by  Aronson and Besala
\cite{aronson-besala67, aronson-besala2}, Bodanko
\cite{bodanko67}, Feller \cite{feller} and  Krzy\.za\'nski \cite{krzyzanski62-a,krzyzanski62-b}.
The interest of the mathematical community has grown considerably since the nineties because of the many
applications to stochastic analysis, where they appear naturally as Kolmogorov operators of
stochastic partial differential equations, to mathematical finance and also to physics (see e.g. \cite{friedlin}).
Starting from the analysis of autonomous Ornstein--Uhlenbeck equations in \cite{daprato-lunardi},
elliptic operators with unbounded coefficients and the associated Cauchy problems have been studied
both in the space $C_b$ of bounded continuous functions
and in $L^p$ spaces on $\R^d$ and on unbounded domains. In the present paper, we focus on
$\R^d$ for simplicity.

It turned out that the usual $L^p$ spaces with respect to the Lebesgue measure are not appropriate for these
investigations. For instance, no realization of the operator $\cA u=u''-x|x|^{\varepsilon}u'$ in one spatial dimension
generates a $C_0$--semigroup in $L^p(\R)$ if $\varepsilon$ is positive (see \cite{PRS}). This example indicates that
 one needs rather restrictive growth conditions  to develop a theory for elliptic operators with unbounded
 coefficients in $L^p(\R^d)$. The picture changes drastically if the semigroup $T(\cdot)$ associated to $\cA$ on
$C_b(\R^d)$ admits an invariant measure $\mu$ and if one works in the spaces  $L^p(\R^d,\mu)$.
 A probability measure $\mu$ is called invariant if
\begin{eqnarray*}
\int_{\R^d}T(t)f\,d\mu=\int_{\R^d}f\,d\mu
\end{eqnarray*}
for all $f\in C_b(\R^d)$ and $t\ge0$. An invariant measure exists if $\cA$ admits a so-called Lyapunov function,
see Hypothesis~\ref{hyp1}(iii) below,
which is satisfied by large classes of (possibly rapidly growing) coefficients.
We stress that $T(\cdot)$ may not admit an invariant measure;
but if an invariant measure exists, it is unique in our setting.
We refer to e.g.\ \cite{BerLorbook,MPW} for  details on the autonomous case.

If $T(\cdot)$ admits an invariant measure,
it can be extended to a strongly continuous semigroup on
$L^p(\R^d,\mu)$ for each $p\in [1,+\infty)$. The invariant measure $\mu$ also plays an important role
in the analysis of the long-time behaviour of the semigroup $T(\cdot)$. More precisely,
under suitable assumptions the function $T(t)f$ tends, as $t\to +\infty$,
 to the average of $f$ with respect to $\mu$ in $L^p(\R^d,\mu)$
 if $f\in  L^p(\R^d,\mu)$,  and the convergence is locally uniform in $\R^d$
if $f\in C_b(\R^d)$, cf.\ \cite{LorLunZam10}.

In this paper we treat the nonautonomous case, where the coefficients of the elliptic operator also depend
on time $t\ge0$.  The semigroup $T(\cdot)$ of the autonomous case is now  replaced by an evolution operator
$\{G(t,s):t\ge s\ge 0\}$ in $C_b(\R^d)$. Its existence and its main properties
have been established in \cite{DaPLun07Orn} for nonautonomous Ornstein--Uhlenbeck operators and
in \cite{KunLorLun09Non} for the general case.
For $f\in C_b(\R^d)$ and $s\ge0$, the function $G(\cdot,s)f$ is defined as the
unique solution $u$ in  $C_b([s,+\infty)\times\R^d)\cap C^{1,2}((s,+\infty)\times\R^d)$ of the parabolic equation
$D_tu=\mathcal A(t) u$ on $(s,+\infty)\times\Rd$ satisfying  $u(s,\cdot)=f$ in $\Rd$.

Similarly, the concept of invariant measure is replaced by the concept of evolution systems of measures
(as referred to in \cite{DPR}). Such a system is a one-parameter family of probability measures
$\{\mu_t: t\ge0\}$ satisfying
\begin{equation}
\int_{\R^d}G(t,s)f\,d\mu_t=\int_{\R^d}f\,d\mu_s
\label{invariance-1}
\end{equation}
for all $f\in C_b(\R^d)$ and $t\ge s\ge 0$. As in the autonomous case, Lyapunov functions provide a convenient
sufficient condition for the existence of an evolution system of measures, see Hypothesis~\ref{hyp1}(iii).
Under such assumption, the proof of Theorem~5.4 of \cite{KunLorLun09Non} even implies the existence of a
tight evolution system of measures; i.e., for  every $\varepsilon>0$
there exists a radius $R>0$ such that $\mu_t(B_R)\ge 1-\varepsilon$ for all $t\ge 0$. In \cite{GeisLun08}
it was shown that nonautonomous Ornstein--Uhlenbeck evolution operators admit infinitely many  evolution systems of
measures under reasonable assumptions. One can however derive uniqueness within certain classes of evolution
systems, see \cite{ALL, GeisLun08, LorLunZam10} for such results in various cases.

If the evolution operator $G(t,s)$ admits an evolution system of measures, then it can be extended
to a contraction from $L^p(\R^d,\mu_s)$ to $L^p(\R^d,\mu_t)$ for every $t\ge s\ge0$,
Indeed, Proposition \ref{prop:properties}(i) implies that $|G(t,s)f|^p\le G(t,s)(|f|^p)$
for all $f\in C_b(\R^d)$
and $t\ge s\ge 0$. Integrating this inequality with respect to $\mu_t$, we obtain
\begin{equation}
\|G(t,s)f\|_{L^p(\Rd,\mu_t)}^p\le \int_{\Rd}G(t,s)(|f|^p)d\mu_t=\int_{\Rd}|f|^pd\mu_s=\|f\|_{L^p(\Rd,\mu_s)}^p,
\label{eq:contr}
\end{equation}
for all $t>s\ge 0$ and $p\in [1,+\infty)$. Each measure $\mu_r$ is equivalent to the Lebesgue measure $\lambda$
since it has a positive density $\rho(r,\cdot)$ with respect to $\lambda$ by results in \cite{BogKryRoc01},
see p.~2067 there.
But the spaces $L^p(\R^d,\mu_t)$ and $L^p(\R^d,\mu_s)$ differ in general for $t\neq s$. This fact
causes several difficulties in the analysis and, in particular, the standard theory of evolution
operators (e.g.\ in \cite{CL}) can not be applied to the evolution operator in the $L^p$--spaces for $\mu_t$.
As in \cite{ALL, GeisLun08, GeisLun09,LorLunZam10,LorZam09}, we will use
the evolution semigroup $\cT(\cdot)$ associated with $G(t,s)$, which is defined by
\begin{equation}
(\cT(t)h)(s,x)=(G(s,s-t)h(s-t,\cdot))(x), \qquad (s,x)\in\R^{1+d},
\label{evol-semi}
\end{equation}
for $t\ge0$ and $h\in C_b(\R^{1+d})$. Here we extend the given coefficients constantly to $t<0$
to obtain an evolution operator $G(t,s)$ for $t\ge s$ on $\R$, as explained in Remark~\ref{rem:r}.

In the papers \cite{ALL,GeisLun09,LorLunZam10} for several special cases
it was established that $G(t,s)f$ converges to the average $m_s(f):=\int_{\R^d}fd\mu_s$  as $t\to +\infty$.
 For bounded diffusion coefficients and time-periodic coefficients, Corollary~3.8 of \cite{LorLunZam10}
shows that $\|G(t,s)f-m_s(f)\|_{L^p(\Rd,\mu_t)}$ tends to 0 as $t\to+\infty$ for  $f\in L^p(\Rd,\mu_s)$
and $s\in\R$. The proof of this result relies on
the fact that one can employ the evolution semigroup on the \emph{compact} time interval
$[0,T]$, for the period $T$.

The non-periodic case was addressed in \cite{ALL}, but only for  diffusion coefficients $q_{ij}$
which are constant in the spatial variables and under an  additional strict dissipativity assumption
on the drift term (namely that $r_0<0$ in Hypothesis~\ref{hyp1}(iv) below). These extra conditions yield the
exponentially decaying gradient estimate
$|(\nabla_x G(t,s)f)(x)|\le c e^{r_0(t-s)}(G(t,s)|\nabla f|)(x)$ for all
$t>s$, $x\in\Rd$ and $f\in C_b^1(\R^d)$. This decay property is crucial for the proofs in \cite{ALL}.
In turn, it implies the cyclic condition
$D_iq_{jk}+D_jq_{ki}+D_kq_{ij}= 0$ in $\R\times\R^d$ for all $i,j,k\in\{1,\ldots,d\}$
by Theorem~3.1 in \cite{luciana}, which explains the restriction to space independent
diffusion coefficients $q_{ij}$ in \cite{ALL}.  On the other hand,
Corollary~5.4 of \cite{ALL} even establishes the exponential decay of $\|G(t,s)f-m_s(f)\|_{L^p(\Rd,\mu_t)}$
with rate $r_0<0$.
To the best of our knowledge, this is the only available result on
the long-time behaviour of the function $\|G(t,s)f-m_s(f)\|_{L^p(\Rd,\mu_t)}$ for non-periodic coefficients
(besides \cite{GeisLun09} for the special case of Ornstein--Uhlenbeck operators).

For non-periodic coefficients, our main result Theorem~\ref{thm:main} shows that
  $\|G(t,s)f-m_s(f)\|_{L^p(\Rd,\mu_t)}$ tends to 0 as $t\to+\infty$ if  $f\in L^p(\R^d,\mu_s)$
and that $G(t,s)f$ converges to  $m_s(f)$ locally uniformly if $f\in C(\R^d)$
vanishes at infinity, where $s\ge0$ and $p\in [1,+\infty)$. This theorem then implies the uniqueness of  tight evolution systems
of measures. Compared to \cite{ALL}, we allow for space dependent and possibly
unbounded diffusion coefficients and we do not need the strict dissipativity assumption $r_0<0$ in
Hypothesis~\ref{hyp1}(iv). To use certain estimates on Green's functions, we require additional bounds on the
coefficients which are  global in time but only local in space, see Hypothesis~\ref{hyp1}(i). 

 As in  \cite{GeisLun09,LorLunZam10}, our approach relies on the decay to 0 of $\nabla_x\cT(t)h$ as
 $t\to+\infty$  in $L^p(\R^{1+d},\nu)$ for all $h\in L^p(\R^{1+d},\nu)$, where
 $\nu$ is defined by
\begin{equation}
\nu(A\times B)=\int_A\mu_s(B)ds,
\label{nu}
\end{equation}
on the product of a Borel set $A\subset \R$ and a Borel set $B\subset \Rd$, and canonically extended to
the $\sigma$-algebra of all the Borel sets of $\R^{1+d}$, see Proposition~\ref{prop:gradT}.
This decay is proved by means of a ``carr\'e du champs'' type inequality for the generator of $\cT(\cdot)$,
which we recall in Proposition~\ref{prop:properties}.
To exploit the decay in $L^p(\R^{1+d},\nu)$, we need lower bounds on the
density of $\mu_t$ which are local in space, but uniform in time. We show such estimates in Lemma~\ref{lem:rho}
using known lower bounds of Green's functions solving the Dirichlet problem on a ball, \cite{Ar}.
Still it is rather delicate to pass from the strong convergence of $\nabla_x\cT(t)$ in $L^p(\R^{1+d},\nu)$
to that of $G(t,s)$ in the proof of Theorem~\ref{thm:main}.

As we have already noticed, the spaces $L^p(\R^d,\mu_t)$ differ from each other. If the coefficients
of the {operators $\cA(t)$ converge as $t\to +\infty$, we establish that the solution $G(t,s)f$  tends to the mean
$m_s(f)$  as $t\to +\infty$ in $L^p(\R^d,\mu_{\infty})$ for all $f\in C_b(\R^d)$ (which is
dense in $L^p(\R^d,\mu_s)$), $s\ge 0$ and $p\in [1,+\infty)$, see Theorem~\ref{thm:conv}. Here
$\mu_{\infty}$ is the invariant measure of the semigroup associated to the limiting autonomous operator $\cA_{\infty}$.
The main step in the proof is the convergence result of Proposition~\ref{prop:conv} for the densities
of the invariant measures, where we use the regularity properties of these densities proved in \cite{BogKryRoc01}.
In Section~\ref{example} we exhibit a class of operators that satisfy all our assumptions.

\subsection*{Notation}
We consider the usual spaces $C^{k+\alpha}(\Omega)$ when $\Omega$ is an open
set or the closure of an open set, $k\in\N\cup\{0\}$ and $\alpha\in [0,1)$.
We use the subscript ``$b$'' (resp., ``$c$'') for the subspaces of the above spaces consisting of functions
which are bounded together with all their derivatives up to the order $k$}(resp., are compactly supported).
We also consider the spaces
$C^{1,2}(J\times \Omega)$ and $C^{k+\alpha/2,2k+\alpha}(J\times \Omega)$ for an interval $J$, and use the subscripts
``$b$'' and ``$c$'' with the same meaning as above.
For $\alpha\in (0,1)$ the subscript ``loc'' means that the derivatives of
order $k$ are $\alpha$-H\"older continuous in each compact set contained in $\Omega$ or $J \times \Rd$.

For a Borel measure $\mu$  on $\Omega$ and  $p\in[1,+\infty)$, we denote by $L^p(\Omega,\mu)$ the
usual Lebesgue space (omitting $\mu$ if it is the Lebesgue measure). For an open set
$\Omega\subset\R^d$ and $k\in\N$,  the standard Sobolev space
with respect to the Lebesgue measure is denoted by $W^{k,p}(\Omega)$. Similarly, $W_p^{1,2}(J \times \Omega)$
is the usual parabolic Sobolev space with respect to the Lebesgue measure for an interval $J$.

Given a family of measures $\{\mu_t: t\ge0\}$, we denote by $m_t(f)$ the average
of the function $f$ with respect to the measure $\mu_t$.
Finally,  $B_R$ designates the open ball centered at 0 with radius $R$ and $\R_+:=[0,+\infty)$.

\section{Assumptions and background material}
\label{sect:2}
We deal with differential operators $\cA(t)$, $t\ge 0$, defined  on smooth functions $\ph:\Rd\to\R$ by
\begin{eqnarray*}
\mathcal A(t)\ph =\sum_{i,j=1}^d q_{ij}(t,\cdot) D_{ij}\ph + \sum_{i=1}^d b_{i}(t,\cdot)D_{i}\ph
={\rm Tr}(Q(t,\cdot)D^2_x\ph)+\langle b(t,\cdot),\nabla_x\ph\rangle,
\end{eqnarray*}
under the following conditions that are always assumed throughout the paper.

\begin{hyp}\label{hyp1}
\begin{enumerate}[\rm (i)]
\item
For some $\alpha\in(0,1)$ and every $i,j\in\{1,\ldots,d\}$
$q_{ij}$, $b_i$ belong to $C^{\alpha/2,1+\alpha}_{\rm loc}(\R_+\times \R^d)$. Moreover,
$q_{ij}\in C_b(\R_+\times B_R)$ and
$D_kq_{ij},b_j\in C_b(\R_+;L^p(B_R))$ for all $i,j,k\in\{1,\ldots,d\}$, all $R>0$ and some $p>d+2$.
\item
The matrix $Q(t,x)$ is symmetric and $\langle Q(t,x)\xi,\xi\rangle\geq \eta(t,x)|\xi|^2$ for
all $t\ge0$, $x,\xi\in\Rd$ and a function $\eta:\R_+\times \R^d\to\R$ with $\inf_{\R_+\times \R^d}\eta=:\eta_0>0$.
\item
There exist a function $0<V\in C^2(\R^d)$ and constants $a\ge0$,
$\kappa>0$ such that $V(x)$ tends to $+\infty$ as $|x|\to +\infty$ and
$(\mathcal{A}(t)V)(x)\leq a-\kappa V(x)$ for all $(t,x)\in \R_+\times\R^d$.
\item
There exist  constants $c_0\ge0$ and $r_0\in\R$ such that
$|\nabla_x Q(t,x)|\le c_0\eta(t,x)$ and $\langle \nabla_x b(t,x)\xi,\xi\rangle\leq r_0\,|\xi|^2$
for all $t\ge0$ and $x,\xi\in\Rd$.
\item
There exists a constant $c>0$ such that either $|Q(t,x)|\le c(1+|x|)V(x)$ and
$\langle b(t,x),x\rangle \le c(1+|x|^2)V(x)$
for all $(t,x)\in \R_+\times \R^d$, or $|Q(t,x)|\le c$ for all  $(t,x)\in  \R_+\times \R^d$.
\end{enumerate}
\end{hyp}

Except for the second part in (i), assumptions (i)--(iii) are needed to construct the evolution operator
and the evolution system of measures $\{\mu_t: t\ge 0\}$. Condition (iv) leads to the gradient estimate \eqref{eq:grad}.
The  second part of (i) is needed to obtain uniform lower bounds of the density of the measures,
see Lemma~\ref{lem:rho}. On the last condition we comment in Remark~\ref{rem:v}.

In the next proposition we collect several basic properties of the evolution operator $G(t,s)$.
\begin{prop}
\label{prop-cont-1}
The following properties are satisfied.
\begin{enumerate}[\rm (i)]
\item
Let $D=\{(t,s,x,y)\in\R_+\times\R_+\times \Rd\times \Rd: t>s\}$.  Then there exists  a Green's function
$g:D\to (0,+\infty)$ such that
 \begin{equation}\label{eq:green}
 G(t,s)f =\int_{\Rd} g(t,s,\cdot,y)f(y)\,dy
 \end{equation}
in $\Rd$ for $f\in C_b(\Rd)$ and $t>s\ge 0$. For every $t>s\ge 0$ and  $x\in\Rd$, the function $g(t,s,x,\cdot)$
belongs to $L^1(\R^d)$ and $\|g(t,s,x,\cdot)\|_{L^1(\R^d)}=1$. Each operator $G(t,s)$ is a contraction
on $C_b(\R^d)$ and $G(t,s)\one =\one$.
\item
For every $f\in C_c(\Rd)$ and $t>0$, the function $s\mapsto G(t,s)f$ is  continuous from $[0,t]$
to $C_b(\Rd)$. If $f\in C_c^2(\R^d)$, then for every $(t,x)\in(0,+\infty)\times \R^{d}$
the function  $(G(t,\cdot)f)(x)$ is differentiable in $[0,t]$ and $(D_sG(t,s)f)(x)=-(G(t,s)\cA(s)f)(x)$.
\item
There exists a constant $C_1>0$ such that for all $p\ge 2$ and $s\in\R_+$
\begin{align}
&|\nabla_xG(s+t,s)f|^p\le C_1^p(t^{-p/2}\vee 1)G(s+t,s)(|f|^p),\qquad\,t>0,\;\,f\in C_b(\R^d).
\label{eq:grad}
\end{align}
\end{enumerate}
\end{prop}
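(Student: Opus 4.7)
The plan is to derive all three items from the construction of $G(t,s)$ in \cite{KunLorLun09Non}, combined with the standard toolkit of the parabolic maximum principle, interior Schauder estimates, and a Bernstein-type argument for the gradient bound. Most of the claims are essentially known in this framework, and the proof will consist of brief sketches with explicit references to that paper.

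For (i), $G(t,s)$ is built in \cite{KunLorLun09Non} by approximating with the classical Dirichlet problems for $D_\tau-\cA(\tau)$ on balls $B_n$ with zero boundary conditions. Each such problem admits a strictly positive Green's function $g_n$, and interior Schauder estimates (available thanks to Hypothesis~\ref{hyp1}(i)) yield monotone pointwise convergence $g_n\nearrow g>0$ with $g(t,s,x,\cdot)\in L^1(\Rd)$, so that \eqref{eq:green} holds. The identity $\|g(t,s,x,\cdot)\|_{L^1}=1$, equivalently $G(t,s)\one=\one$, is the usual ``no loss of mass'' statement; it is a consequence of the Dynkin-type bound
\begin{equation*}
(G(t,s)V)(x)\le e^{-\kappa(t-s)}V(x)+\frac{a}{\kappa},
\end{equation*}
which transfers from the approximating problems thanks to Hypothesis~\ref{hyp1}(iii), combined with cutoff functions $\eta_R\nearrow\one$ and the divergence of $V$ at infinity to rule out any escape of mass. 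Contractivity of $G(t,s)$ on $C_b(\Rd)$ is then immediate from positivity and $G(t,s)\one=\one$.

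For (ii), fix $t>0$ and $f\in C_c(\Rd)$. Given $s_n\to\os\in[0,t)$, set $u_n(\tau,x):=(G(\tau,s_n)f)(x)$; these functions are uniformly bounded in $C_b(\Rd)$, and interior parabolic Schauder estimates give $C^{1+\alpha/2,2+\alpha}$-precompactness on compact subsets of $(\os,t]\times\Rd$. Any subsequential limit is a bounded classical solution of the Cauchy problem on $(\os,+\infty)\times\Rd$ with datum $f$ at $\os$, hence coincides with $G(\cdot,\os)f$ by the uniqueness proved in \cite{KunLorLun09Non}. Uniform convergence at $\tau=t$ is obtained by combining local-uniform convergence on large balls with tail control on $\int_{|y|>R}g(t,s_n,x,y)\,dy$ via the Lyapunov function and the compact support of $f$. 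For the differentiability statement, when $f\in C_c^2(\Rd)$ one has $\cA(\cdot)f\in C_b(\R_+\times\Rd)$, and the forward equation together with the evolution property yields
\begin{equation*}
G(t,s)f-G(t,s+h)f=G(t,s+h)\int_s^{s+h}G(\sigma,s)\cA(\sigma)f\,d\sigma,
\end{equation*}
so that dividing by $h$ and invoking the continuity just established gives $D_s(G(t,s)f)(x)=-(G(t,s)\cA(s)f)(x)$.

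For (iii), I invoke the classical Bernstein method, adapted to the nonautonomous parabolic setting. Setting $u(\tau,x):=(G(\tau,s)f)(x)$, one considers an auxiliary function of the form
\begin{equation*}
W(\tau,x):=(G(\tau,s)|f|^p)(x)-\alpha\bigl((\tau-s)\wedge 1\bigr)^{p/2}|\nabla_xu(\tau,x)|^p,
\end{equation*}
and shows, for a suitable $\alpha>0$ depending only on $\eta_0,c_0,r_0$, that $W$ is a classical supersolution of $D_\tau-\cA(\tau)$. This reduces to a direct but lengthy computation: differentiation produces commutator terms involving $\nabla_xQ$ and $\nabla_xb$, which are controlled via Hypothesis~\ref{hyp1}(iv) (in particular, $|\nabla_xQ|\le c_0\eta$ permits absorption into the coercive second-order part of $\cA(\tau)$ via Young's inequality). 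Since $W(s,\cdot)=|f|^p\ge 0$, the parabolic maximum principle yields $W\ge 0$; evaluating at $\tau=s+t$ and rearranging produces \eqref{eq:grad}, with the factor $t^{-p/2}\vee 1$ arising from inversion of the weight $((\tau-s)\wedge 1)^{p/2}$. The main technical obstacle is this Bernstein algebra, particularly the uniformity of $C_1$ in $p\ge 2$, but essentially the same computation has been carried out in \cite{BerLorbook,KunLorLun09Non}, from which the present assertion can be quoted directly.
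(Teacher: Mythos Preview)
Your sketches for (i) and (ii) are in line with what the paper does: it simply cites Proposition~2.4 and Lemma~3.1 of \cite{KunLorLun09Non} for these parts. (A minor slip: in your integral identity for (ii) the integrand should be $\cA(\sigma)G(\sigma,s)f$ rather than $G(\sigma,s)\cA(\sigma)f$, though this does not affect the limit as $h\to 0$.)

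The gap is in part (iii). You attempt a Bernstein argument directly at level $p$ with the auxiliary function $W=G(\tau,s)|f|^p-\alpha((\tau-s)\wedge 1)^{p/2}|\nabla_x u|^p$ and identify the uniformity of $C_1$ in $p\ge 2$ as ``the main technical obstacle'', which you then defer to \cite{BerLorbook,KunLorLun09Non}. But those references carry out the Bernstein computation only for $p=2$; in \cite{KunLorLun09Non} the relevant auxiliary function is $z_n=u_n^2+C_1^{-2}(t-s)|\nabla_x u_n|^2$ on the Neumann approximants in $B_n$. A direct Bernstein estimate at general $p$ produces $p$-dependent constants when Young's inequality is used to absorb the $\nabla_xQ$ and $\nabla_xb$ commutator terms into the coercive part, so the uniformity cannot be ``quoted directly'' from those sources.

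The paper avoids this issue by a different, much shorter route: it runs Bernstein only for $p=2$ (on the Neumann problems in $B_n$, then passes to the limit $n\to\infty$) to obtain $|\nabla_xG(s+t,s)f|^2\le C_1^2 t^{-1}G(s+t,s)(|f|^2)$ for $t\in(0,1]$, and then upgrades to general $p\ge 2$ using that $g(s+t,s,x,\cdot)$ is a probability density, so Jensen's inequality gives
\[
|\nabla_xG(s+t,s)f|^p=\bigl(|\nabla_xG(s+t,s)f|^2\bigr)^{p/2}\le C_1^p t^{-p/2}\bigl(G(s+t,s)|f|^2\bigr)^{p/2}\le C_1^p t^{-p/2}G(s+t,s)|f|^p,
\]
with the \emph{same} $C_1$ for every $p$. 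The extension to $t>1$ then follows from the evolution law via $\nabla_xG(s+t,s)=\nabla_xG(s+t,s+t-1)G(s+t-1,s)$. This Jensen step is the missing idea in your proposal.
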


\begin{proof}
Statement (i) and (ii) come from Proposition 2.4 (and its proof) and Lemma 3.1 of \cite{KunLorLun09Non}.
Also statement (iii) is a consequence of the results in \cite{KunLorLun09Non} although
it was not explicitly stated there. To prove it, for every $n\in\N$ and $s\ge 0$
we denote by $u_n$ the unique classical solution to Neumann--Cauchy problem
\begin{align*}
D_tu_n(t,x)&=\cA(t)u_n(t,x), \qquad (t,x)\in (s,+\infty)\times B_n,\\
D_\nu u_n(t,x)&=0, \qquad (t,x)\in (s,+\infty)\times\partial B_n,\\
u_n(s,t)&=f(x), \qquad x\in B_n.
\end{align*}
Let $w_n$ solve the same boundary value problem with initial condition $w_n(s,\cdot)=f^2$.
In the proof of Theorem~4.1 of \cite{KunLorLun09Non}  it is shown that the function
\begin{eqnarray*}
(t,x)\mapsto z_n(t,x)=(u_n(t,x))^2+C^{-2}_1(t-s)|\nabla_xu_n(t,x)|^2
\end{eqnarray*}
satisfies the inequality $D_tz_n-\cA z_n\le 0$ in $(s,s+1]\times B_n$, $D_\nu z_n\le 0$ on
$(s,s+1]\times\partial B_n$ and $z_n(s,\cdot)=f^2$ in $B_n$ for each $n\in\N$.
The constant $C_1$ only depends on $\eta_0$, $d$, $c_0$ and $r_0$ from Hypothesis~\ref{hyp1}.
The classical maximum principle now implies that $z_n\le w_n$ in $(s,s+1]\times B_n$.

By Remark~2.3 of \cite{KunLorLun09Non}, the functions $u_n$ and $w_n$ converge to
$G(\cdot,s)f$ and $G(\cdot,s)f^2$, respectively, in $C^{1,2}((s,s+R)\times B_R)$ for every $R>0$.
Taking the limit $n\to +\infty$, the inequality  $z_n\le w_n$ thus yields
the formula \eqref{eq:grad} with $p=2$ for all
$s\ge0$ and $t\in (0,1]$. Let $p>2$. Using \eqref{eq:grad} with $p=2$, H\"older's inequality and
$\|g(t,s,x,\cdot)\|_{L^1(\R^d)}=1$ from (i), we derive
\begin{align*}
|\nabla_xG(s+t,s)f|^p&=(|\nabla_xG(s+t,s)f|^2)^{p/2}\le (C_1^2t^{-1}G(s+t,s)(|f|^2))^{p/2}\\
&\le C_1^pt^{-p/2}G(s+t,s)(|f|^p)
\end{align*}
for all $s\ge 0$, $t\in (0,1]$  and $f\in C_b(\R^d)$. To extend this estimate  to  $t>1$, one finally uses
the evolution law and splits $\nabla_xG(s+t,s)=\nabla_xG(s+t,s+t-1)G(s+t-1,s)f$.
\end{proof}

\begin{remark}\label{rem:r}
Setting  $\cA(t):=\cA(0)$ for $t< 0$, we extend the coefficients $q_{ij}$ and $b_i$ to $t\in\R$ in such a way
that Hypotheses~\ref{hyp1} hold with $\R_+$ replaced by $\R$. Hence, Proposition~\ref{prop-cont-1}
is valid on $\R$ instead of $\R_+$, and for  $t\in\R$ and $(-\infty,t]$ instead of
$[0,t]$ in part (ii). The extended evolution operator is
also denoted by $G(t,s)$, for $t\ge s$ in  $\R$.
Moreover, we set $\mu_s:= G(0,s)^*\mu_0$ for $s<0$, where $G(0,s)^*$ is the adjoint of the
operator $G(0,s)$ in $C_b(\R^d)$. Using formula \eqref{eq:green} to extend $G(0,s)$ to characteristic
functions, it is easy to see that $\mu_s$ is a probability measure for every $s<0$. The set $\{\mu_t: t\in \R\}$ is an evolution
system of measures for $G(t,s)$ on $\R$, see
the proof of Theorem~5.4 of \cite{KunLorLun09Non}.
\end{remark}

We now recall the properties of the evolution semigroup $\cT(\cdot)$ (see \eqref{evol-semi}) and the measure $\nu$ that we use in
this paper.  To define it, we use the evolution operator and the evolution system of measures on  $\R$ from the above remark.

\begin{prop}
\label{prop:properties}
Let $p\in [1,+\infty)$.  The following properties are satisfied.
\begin{enumerate}[\rm (i)]
\item
The measure $\nu$ defined in \eqref{nu} is infinitesimally invariant for $\cT(\cdot)$; i.e.,
\begin{eqnarray*}
\int_{\R^d}(\cA(\cdot)h-D_th)\,d\nu=0\qquad \text{for all \ } h\in C^{\infty}_c(\R^{1+d}).
\end{eqnarray*}
Moreover, the restriction to $C_c(\R^{1+d})$ of the evolution semigroup $\cT(\cdot)$ may be extended to a strongly
continuous contraction semigroup $\cT_p(\cdot)$ in $L^p(\R^{1+d},\nu)$. Its generator is denoted by $\cG_p$.
\item
For any $u\in D(\cG_2)$ the following  {\it ``carr\'e du champs''} type inequality holds true:
\begin{equation}
 \label{integration<Hu}
 \eta_0\! \int_{\R^{1+d}}\!|\nabla_x u|^2\,d\nu \leq
 \int_{\R^{1+d}}\!|Q^{1/2}\nabla_x u|^2\,d\nu \leq  -\!\! \int_{\R^{1+d}}\! u \cG_2 u \,d\nu.
 \end{equation}
\end{enumerate}
\end{prop}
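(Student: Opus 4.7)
Fix $h\in C^{\infty}_c(\R^{1+d})$ with time-support in $[a,b]$. Choose any $t\ge b$ and define $\varphi(s):=\int_{\R^{d}}(G(t,s)h(s,\cdot))\,d\mu_t$ for $s\le t$. By the evolution-system identity \eqref{invariance-1}, $\varphi(s)=\int_{\R^{d}}h(s,\cdot)\,d\mu_s$, which vanishes for $s\le a$ and $s\ge b$. On the other hand, applying the product rule together with Proposition~\ref{prop-cont-1}(ii) (which gives $D_s G(t,s)f=-G(t,s)\cA(s)f$ for $f\in C^{2}_{c}$), I get
\begin{equation*}
\varphi'(s)=-\int_{\R^d}\cA(s)h(s,\cdot)\,d\mu_s+\int_{\R^d}D_sh(s,\cdot)\,d\mu_s.
\end{equation*}
Integrating this identity on $[a,b]$ and using $\varphi(a)=\varphi(b)=0$ gives $\int_{\R^{1+d}}(\cA(\cdot)h-D_th)\,d\nu=0$. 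One small subtlety is justifying that $s\mapsto G(t,s)h(s,\cdot)$ is differentiable as a map into $C_b$ uniformly in $x$ with the correct derivative; this is handled by Proposition~\ref{prop-cont-1}(ii) applied to frozen-time data and a standard splitting of difference quotients.

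\textbf{Plan for part (i) — extension and strong continuity.} The pointwise estimate $|G(t,s)f|^p\le G(t,s)|f|^p$ (i.e.\ Proposition~\ref{prop-cont-1}(i) combined with Jensen's inequality relative to the probability kernel $g(t,s,x,\cdot)dy$) and Fubini yield
\begin{equation*}
\|\cT(t)h\|_{L^p(\R^{1+d},\nu)}^p=\int_{\R}\int_{\R^d}|G(s,s-t)h(s-t,\cdot)|^p d\mu_s\,ds\le \int_{\R}\|h(\sigma,\cdot)\|_{L^p(\mu_\sigma)}^{p}\,d\sigma=\|h\|_{L^p(\nu)}^p
\end{equation*}
via the change of variables $\sigma=s-t$; this extends $\cT(t)$ from $C_c(\R^{1+d})$ to a contraction $\cT_p(t)$ on $L^p(\R^{1+d},\nu)$. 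For strong continuity, take $h\in C_c(\R^{1+d})$: continuity of $(t,s,x)\mapsto (G(s,s-t)h(s-t,\cdot))(x)$, a uniform $\|h\|_\infty$ bound, and the observation that the time-support of $\cT(t)h$ is shifted by at most $t$ allow dominated convergence to give $\cT_p(t)h\to h$ in $L^p(\nu)$ as $t\to 0^+$. An $\varepsilon/3$ argument based on the contraction estimate and density of $C_c$ in $L^p(\R^{1+d},\nu)$ completes the proof.

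\textbf{Plan for part (ii) — carr\'e du champs.} For $u\in C^{\infty}_c(\R^{1+d})$, the product rule for $\cA(t)-D_t$ gives the key identity
\begin{equation*}
(\cA(\cdot)-D_t)(u^2)=2u(\cA(\cdot)-D_t)u+2|Q^{1/2}\nabla_x u|^2.
\end{equation*}
Since $u^2\in C^{\infty}_c$, part~(i) applied to $u^2$ forces $\int_{\R^{1+d}}(\cA(\cdot)-D_t)(u^2)\,d\nu=0$, and rearranging yields equality $\int|Q^{1/2}\nabla_x u|^2d\nu=-\int u\,\cG_2 u\,d\nu$. The pointwise bound $|Q^{1/2}\xi|^2\ge\eta_0|\xi|^2$ from Hypothesis~\ref{hyp1}(ii) takes care of the left-hand inequality.

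\textbf{Plan for part (ii) — approximation and main obstacle.} For general $u\in D(\cG_2)$ I would approximate by $u_n:=\rho_{1/n}\ast(\chi_n u_{(n)})$ or by the Yosida regularization $u_n:=n(n-\cG_2)^{-1}u$, giving $u_n\in C^{\infty}_c$ (in the first case) or at least $u_n\to u$ and $\cG_2 u_n\to \cG_2 u$ in $L^2(\nu)$. The right-hand side $-\int u_n\cG_2 u_n\,d\nu$ then passes to the limit, while Fatou's lemma applied to $|Q^{1/2}\nabla_x u_n|^2\ge 0$ yields the inequality (rather than equality) in the limit, and simultaneously produces $\nabla_x u\in L^2(\R^{1+d},\nu;\R^d)$ as a by-product. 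The main obstacle is that $C^{\infty}_c(\R^{1+d})$ is not obviously a core for $\cG_2$ because the drift and diffusion are unbounded in space, so the mollification-and-cutoff procedure must be justified carefully: one has to control the commutator of $\cG_2$ with the cutoff via the local $L^p$-integrability of $D_kq_{ij}$ and $b_j$ in Hypothesis~\ref{hyp1}(i), and show that the resulting error terms vanish in $L^2(\nu)$ as the cutoff radius tends to infinity, exploiting the local nature of the bounds and the finiteness of each $\mu_t$.
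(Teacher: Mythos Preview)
Your treatment of part (i) is essentially what underlies the references the paper cites (Lemma~6.3(ii) of \cite{KunLorLun09Non} and Theorem~2.1 of \cite{LorZam09}); the paper itself does not spell these arguments out.

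For part (ii) your route diverges from the paper's, and the approximation step contains a genuine gap. You prove the identity on $C^\infty_c(\R^{1+d})$ and then propose either Yosida regularization or mollification-plus-cutoff to reach $D(\cG_2)$. Neither works as stated. Yosida approximants $u_n=n(n-\cG_2)^{-1}u$ lie in $D(\cG_2)$ but not in $C^\infty_c$, so the identity you derived does not apply to them. The cutoff approach requires controlling the commutator $[\cG_2,\chi_n]u$, but this commutator contains first-order terms involving $\nabla_x u$; for a general $u\in D(\cG_2)$ you do not yet know that $\nabla_x u\in L^2(\R^{1+d},\nu)$ --- that is precisely what \eqref{integration<Hu} is meant to establish --- so the argument is circular. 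The local $L^p$ bounds in Hypothesis~\ref{hyp1}(i) do not help here because the obstruction is regularity of $u$, not of the coefficients.

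The paper avoids this circularity by choosing a different core. It works with the space $\mathcal D$ of bounded functions $u$ with \emph{compact time support only}, lying in local parabolic Sobolev spaces, such that $\cG u:=\cA(\cdot)u-D_tu\in C_b(\R^{1+d})$. That $\mathcal D$ is a core for $\cG_2$ is the content of Theorem~2.1 of \cite{LorZam09}. The missing gradient regularity is then supplied \emph{a priori} by combining the resolvent representation $u=\int_0^{+\infty}e^{-t}\cT(t)(u-\cG u)\,dt$ (Proposition~2.5 of \cite{LorZam09}) with the pointwise gradient estimate \eqref{eq:grad} for $G(t,s)$; these together yield $\mathcal D\subset C^{0,1}_b(\R^{1+d})$ with $\|\nabla_x u\|_\infty\le \tilde c(\|u\|_\infty+\|\cG u\|_\infty)$. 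With this bound in hand the carr\'e du champs computation goes through on $\mathcal D$ as in \cite{LorLunZam10}, and the passage to $D(\cG_2)$ is by closure. Hypothesis~\ref{hyp1}(v) enters exactly at this point (see Remark~\ref{rem:v}), something your outline does not account for.
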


\begin{proof}
We refer the reader to Lemma~6.3(ii) of \cite{KunLorLun09Non} and Theorem~2.1 of \cite{LorZam09}
for part (i) and to  Corollary~2.16 of \cite{LorLunZam10}
for part (ii). The results in \cite{LorLunZam10} are only shown for the case of time-periodic coefficients
with slightly different assumptions from our Hypotheses~\ref{hyp1}. We thus sketch the proof of (ii).

We have to replace the space $D(G_{\infty})$ used in \cite{LorLunZam10} by the space ${\mathcal D}$
of all $u\in C_b(\R^{1+d})$ belonging to $W^{1,2}_p((-R,R)\times B_R)$ for all $R>0$
and $1\le p<+\infty$ such that  $\cG u:={\mathcal A}(\cdot)u-D_tu$ is contained in $C_b(\R^{1+d})$ and
${\rm supp}(u)\subset [-M,M]\times\R^d$ for some $M>0$. The  generator $\cG_2$ is the closure
of the operator $\cG$ defined on ${\mathcal D}$ by Theorem~2.1 of \cite{LorZam09}.
Proposition~2.5 of \cite{LorZam09}  yields
\begin{equation}
u= \int_0^{+\infty} e^{-t} \cT(t)(u-\cG u)\,dt
\label{star-star}
\end{equation}
for all $u\in {\mathcal D}$. The gradient estimate \eqref{eq:grad} for $G(t,s)$ directly implies
the inequality
\begin{equation}\label{est:gradT}
\|\nabla_x\cT(t)h\|_{\infty}\le C_1(t^{-\frac{1}{2}}\vee 1)\|h\|_{\infty}
\end{equation}
for  $h\in C_b(\R^{1+d})$ and $t>0$. As in Proposition~2.14 of \cite{LorLunZam10}, using \eqref{star-star}
we then infer that ${\mathcal D}\subseteq C^{0,1}_b(\R^{1+d})$ and
$\|\nabla_x u\|_\infty\le \tilde c\, (\|u\|_\infty +\|\cG u\|_\infty)$ for $u\in {\mathcal D}$.
Formula \eqref{integration<Hu} can now be shown analogously as
Proposition~2.15 and Corollary~2.16 in \cite{LorLunZam10}, where the first inequality
in \eqref{integration<Hu} follows from Hypothesis~\ref{hyp1}(ii).
\end{proof}

\begin{remark} \label{rem:v}
Hypothesis \ref{hyp1}(v) is crucial to prove the inequality \eqref{integration<Hu}, and this is the only part
of the paper where we use it. Typically, one takes as a Lyapunov function $V(x)= 1+|x|^{2n}$ for
$x\in\R^d$ and some $n\in\N$ or $V(x)=e^{\delta|x|^\beta}$ for  $x\in\R^d$ and some $\beta,\delta>0$,
so that Hypothesis \ref{hyp1}(v) is rather mild. See also the example in Section~\ref{example}.
\end{remark}

In the time periodic case, the next result was shown in Proposition~3.4 of \cite{LorLunZam10} for $p=2$
extending \cite{daprato-goldys} in the autonomous case. In this paper we need it for $p>d$. The proof in
our case follows the same lines as \cite{LorLunZam10}. Nevertheless, since Proposition~\ref{prop:gradT}
is crucial for all our analysis,  we provide a proof for the reader's convenience.

\begin{prop}\label{prop:gradT}
For all $p\in[2,+\infty)$  and $h\in L^p(\R^{1+d},\nu)$ we have
\begin{equation}
\lim_{t\to +\infty} \|\,|\nabla_x\cT_p(t)h|\, \|_{L^p(\R^{1+d},\nu)} =0.
\label{gradient-p}
\end{equation}
\end{prop}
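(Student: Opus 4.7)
The plan is to establish the statement first for $p = 2$ on the dense core $D(\cG_2)$ and then extend to $p \in (2,+\infty)$ by interpolation and density. Two ingredients drive the argument: the carr\'e du champs inequality \eqref{integration<Hu}, and the $L^p$-form of the gradient estimate \eqref{eq:grad}, which after integration against the invariant measure $\nu$ yields $\|\nabla_x \cT_p(t) h\|_{L^p(\R^{1+d},\nu)} \le C_1 (t^{-1/2} \vee 1) \|h\|_{L^p(\R^{1+d},\nu)}$ for all $t > 0$, $p \ge 2$, and $h$ in a dense subset of $L^p(\R^{1+d},\nu)$, whence for all such $h$ by density.

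For $p = 2$ and $u \in D(\cG_2)$, differentiating $\|\cT_2(t) u\|_{L^2(\nu)}^2$ and applying \eqref{integration<Hu} to $\cT_2(t) u$ gives $\frac{d}{dt}\|\cT_2(t) u\|_{L^2(\nu)}^2 \le -2\eta_0 \|\nabla_x \cT_2(t) u\|_{L^2(\nu)}^2$, and integrating yields the fundamental bound $\int_0^{+\infty} \|\nabla_x \cT_2(t) u\|_{L^2(\nu)}^2 \, dt \le \|u\|_{L^2(\nu)}^2 / (2\eta_0)$. Integrability alone is not enough, so I additionally show that $\phi(t) := \|\nabla_x \cT_2(t) u\|_{L^2(\nu)}^2$ is Lipschitz on $[1,+\infty)$. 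Using the semigroup identity $\cT_2(t+h) u - \cT_2(t) u = \cT_2(t)(\cT_2(h) u - u)$ and the standard bound $\|\cT_2(h) u - u\|_{L^2(\nu)} \le h \|\cG_2 u\|_{L^2(\nu)}$, the $L^2$-gradient bound for $t \ge 1$ yields
$$\|\nabla_x \cT_2(t+h) u - \nabla_x \cT_2(t) u\|_{L^2(\nu)} \le C_1 h \|\cG_2 u\|_{L^2(\nu)},$$
which combined with $\|\nabla_x \cT_2(t) u\|_{L^2(\nu)} \le C_1 \|u\|_{L^2(\nu)}$ gives the Lipschitz property. A nonnegative uniformly continuous integrable function on a half-line must decay to $0$ (Barbalat-type lemma), so $\phi(t) \to 0$. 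Density of $D(\cG_2)$ in $L^2(\nu)$ together with the $L^2$-gradient bound then extends the conclusion to every $h \in L^2(\nu)$.

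For $p > 2$, I take $h \in C_c(\R^{1+d})$ (dense in $L^p(\nu)$) and fix $q > p$, so $h \in L^2(\nu) \cap L^q(\nu)$. Log-convexity of the $L^r$-norms gives
$$\|\nabla_x \cT_p(t) h\|_{L^p(\nu)} \le \|\nabla_x \cT_2(t) h\|_{L^2(\nu)}^{1-\theta} \|\nabla_x \cT_q(t) h\|_{L^q(\nu)}^{\theta}$$
with $\theta \in (0,1)$ satisfying $1/p = (1-\theta)/2 + \theta/q$; the second factor is bounded by $(C_1 \|h\|_{L^q(\nu)})^\theta$ for $t \ge 1$, while the first tends to $0$ by the $p=2$ step, so the left-hand side converges to $0$. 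Density of $C_c(\R^{1+d})$ in $L^p(\nu)$ together with the contractive bound $\|\nabla_x \cT_p(t) g\|_{L^p(\nu)} \le C_1 \|g\|_{L^p(\nu)}$ for $t \ge 1$ handles arbitrary $h \in L^p(\nu)$. The delicate part of the whole scheme is the uniform continuity of $\phi$, which is what upgrades $L^1_t$-integrability to pointwise decay; it hinges on using \eqref{eq:grad} both as an $L^2$-boundedness statement and through the commutation implicit in the semigroup increment, while carefully extending \eqref{eq:grad} from the $C_b(\R^d)$-data where it is stated to the $L^p$-elements produced by $\cT_2(h) u - u$.
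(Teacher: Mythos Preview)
Your argument is correct and follows the same overall strategy as the paper: the carr\'e du champs inequality gives $\int_0^\infty \phi(t)\,dt<\infty$, a regularity property of $\phi$ upgrades this to $\phi(t)\to 0$, and then density plus interpolation handle general $h$ and $p>2$. The differences are technical rather than structural. Where you show $\phi$ is Lipschitz on $[1,+\infty)$ via the increment estimate $\|\nabla_x\cT_2(t)(\cT_2(h)u-u)\|_{L^2(\nu)}\le C_1 h\|\cG_2 u\|_{L^2(\nu)}$ and invoke a Barbalat-type lemma, the paper instead takes $u\in D(\cG_2^2)$, differentiates $\chi_u$ directly, and bounds $|\chi_u'|\le \chi_u+\chi_{\cG_2 u}\in L^1(0,+\infty)$. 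Your route has the slight advantage of requiring only $u\in D(\cG_2)$ rather than $D(\cG_2^2)$. For the passage to $p>2$, the paper interpolates between $L^2(\nu)$ and $L^\infty$ (using the sup-norm gradient bound \eqref{est:gradT} on $C_c$ data), while you interpolate between $L^2(\nu)$ and $L^q(\nu)$ for some $q>p$; both are legitimate and yield the same conclusion after the density step.
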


\begin{proof}
The estimate \eqref{eq:grad} implies  that $|\nabla_x\cT(t)h|^p\le C_1^p(t^{-p/2}\vee 1)\cT(t)|h|^p$ on $\R^{1+d}$
 for all  $h\in C_c(\R^{1+d})$, $t>0$, and $p\in [2,+\infty)$. We now integrate this inequality on $\R^{1+d}$
 with respect to the measure $\nu$ and use the density of $C_c(\R^{1+d})$ in $L^p(\R^{1+d},\nu)$.
 It follows that $\nabla_x\cT_p(t)h$ is contained in $L^p(\R^{1+d},\nu)^d$ and
\begin{equation}\label{est:gradTp}
\|\,|\nabla_x\cT_p(t)h|\,\|_{L^p(\R^{1+d})} \le C_1(t^{-1/2}\vee 1)\|h\|_{L^p(\R^{1+d},\nu)}
\end{equation}
for all $h\in L^p(\R^{1+d},\nu)$ and $t>0$, where we use the contractivity of $\cT(t)$ in $L ^1(\R^{1+d},\nu)$.
Combined with the H\"older inequality, this estimate and
\eqref{est:gradT} yield
\begin{align}
\|\,|\nabla_x\cT_p(t)h|\,\|_{L^p(\R^{1+d},\nu)}  &\le\|\,|\nabla_x\cT(t)h|\,\|_{\infty}^{1-\frac{2}{p}}
                                       \,\|\,|\nabla_x\cT_2(t)h|\,\|_{L^2(\R^{1+d},\nu)}^{\frac{2}{p}}\notag\\
&\le C_1\|h\|_{\infty}\|\,|\nabla_x\cT_2(t)h|\,\|_{L^2(\R^{1+d},\nu)}^{\frac{2}{p}}
\label{3star}
\end{align}
for all $t>0$ and $h\in C_c(\R^{1+d})$.  In view of \eqref{est:gradTp} and \eqref{3star}, we thus have to show
\eqref{gradient-p} only for $p=2$ since $C_c(\R^{1+d})$ is dense in $L^p(\R^{1+d},\nu)$. Similarly,
it suffices to prove  \eqref{gradient-p} with $p=2$ for functions in the dense subset $D(\cG_2^2)$
of $L^2(\R^{1+d},\nu)$.

Take $u \in D(\cG_2^2)$. Then, the function $t\mapsto\|{\mathcal T}_2(t)u \|_{L^2(\R^{1+d},\nu)}^2$
is differentiable on $[0,+\infty)$ with derivative
$2\langle {\mathcal T}_2(\cdot)u, \cG_2{\mathcal T}_2(\cdot)u\rangle_{L^2(\R^{1+d}, \nu)}$.
 From  \eqref{integration<Hu}, we then deduce
\begin{align*}
2\eta_0 \int_{0}^{t}\int _{\R^{1+d}}|\nabla _{x}{\mathcal T}_2(s)u|^2\,d\nu \,ds
&\le - \int_{0}^{t} 2\langle {\mathcal T}_2(s)u, \cG_2{\mathcal T}_2(s)u\rangle_{L^2(\R^{1+d}, \nu)}\,ds\\
& = \|u\|_{L^2(\R^{1+d},\nu)}^2-\|\cT(t)u\|_{L^2(\R^{1+d},\nu)}^2\le \|u\|_{L^2(\R^{1+d},\nu)}^2
\end{align*}
for $t\ge0$; i.e., the map $\chi _{u} := \|\,| \nabla_x{\mathcal T}_2(\cdot)u|\,\|_{L^2(\R^{1+d},\nu)}^2$
belongs to $L^1(0,+\infty)$. The estimate \eqref{integration<Hu} also implies that
the gradient $\nabla_x: D(\cG_2)\to L^2(\R^{1+d},\nu)^d$ is continuous.
Since $u \in D(\cG_2^2)$, the function $\chi_u$ is thus differentiable and
\begin{align*}
|\chi_u'| &= 2\,\Big |\int _{\R^{1+d}}\langle\nabla_x{\mathcal T}_2(\cdot)u,
 \nabla_x{\mathcal T}_2(\cdot)\cG_2 u \rangle \, d\nu\Big|\\
& \leq 2\,\|\,|\nabla_x\cT_2(\cdot)u|\,\|_{L^2(\R^{1+d},\nu)}\,
     \|\,|\nabla_x\cT_2(\cdot)\cG_2 u|\,\|_{L^2(\R^{1+d},\nu)}
 \leq\chi_{u}+\chi _{\cG_2 u}
\end{align*}
in $[0,+\infty)$. Using $\cG_2 u \in D(\cG_2)$ once more, we conclude that
also the derivative $\chi_u'$ belongs to $L^1(0,+\infty)$ and so $\chi_u(s)$ vanishes as $s\to +\infty$.
\end{proof}

We conclude this section by a simple convergence lemma for tight sequences of probability
measures.
\begin{lemma}\label{lem:conv}
Let $(\tilde\mu_n)$ be a tight sequence of probability measures in $\Rd$ and $(g_n)\subset C_b(\Rd)$ be
a bounded sequence. The following assertions hold.
\begin{enumerate}[\rm (i)]
\item
If $g_n$ tends to zero locally uniformly in $\Rd$ as $n\to+\infty$, then
$\int_{\Rd} g_n\,d\tilde\mu_n$ vanishes as $n\to+\infty$.
\item
If $g_n$ tends to some $g\in C_b(\Rd)$ locally uniformly in $\Rd$ and $\tilde\mu_n$ converges
weakly$^*$ to  a probability measure $\tilde\mu$ in $\Rd$ as $n\to+\infty$, then
$\int_{\Rd} g_n\,d\tilde\mu_n$ tends to $\int_{\Rd} g\,d\tilde\mu$ as $n\to+\infty$.
 \end{enumerate}
\end{lemma}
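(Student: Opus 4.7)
The lemma is an elementary tightness argument; I would give a short direct proof of (i) and then deduce (ii) from (i) plus weak$^*$ convergence.

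For part (i), the plan is to set $M:=\sup_n\|g_n\|_\infty<+\infty$ and, given $\varepsilon>0$, exploit tightness to pick a radius $R>0$ with $\tilde\mu_n(\R^d\setminus B_R)\le\varepsilon/(2M+1)$ for every $n\in\N$. Local uniform convergence of $g_n$ to $0$ on the compact set $\overline{B_R}$ then provides an index $n_0$ such that $|g_n(x)|\le\varepsilon/2$ for all $x\in\overline{B_R}$ and $n\ge n_0$. Splitting the integral into the parts over $B_R$ and its complement, one estimates
\begin{equation*}
\Big|\int_{\R^d} g_n\,d\tilde\mu_n\Big|\le \int_{B_R}|g_n|\,d\tilde\mu_n+\int_{\R^d\setminus B_R}|g_n|\,d\tilde\mu_n\le \frac{\varepsilon}{2}+M\cdot\frac{\varepsilon}{2M+1}<\varepsilon,
\end{equation*}
for $n\ge n_0$, which gives (i).

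For part (ii), I would decompose
\begin{equation*}
\int_{\R^d} g_n\,d\tilde\mu_n-\int_{\R^d} g\,d\tilde\mu=\int_{\R^d} (g_n-g)\,d\tilde\mu_n+\Big(\int_{\R^d} g\,d\tilde\mu_n-\int_{\R^d} g\,d\tilde\mu\Big).
\end{equation*}
The second summand vanishes as $n\to+\infty$ directly from the weak$^*$ convergence $\tilde\mu_n\to\tilde\mu$ applied to the bounded continuous test function $g$. To handle the first summand via part (i), I would apply it to the sequence $(g_n-g)$, which is bounded in $C_b(\R^d)$ and tends to $0$ locally uniformly by assumption. The only point requiring a line of justification is that $(\tilde\mu_n)$ is still tight in the setting of (ii); this is a standard consequence of Prokhorov's theorem on the Polish space $\R^d$, since weak$^*$ convergence of a sequence of probability measures to a probability measure implies tightness of the sequence.

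I do not foresee a serious obstacle: the argument is a routine $\varepsilon/2$ split on a tight mass concentration set. The only place where care is required is ensuring that the constant $M$ indeed dominates the sequence uniformly (immediate from boundedness of $(g_n)$) and that the hypothesis in (ii) legitimately supplies tightness to feed back into (i).
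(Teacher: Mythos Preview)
Your proof is correct and essentially identical to the paper's: both split the integral over a large ball and its complement, control the complement via tightness and the ball via local uniform convergence, and use weak$^*$ convergence for the $g$-integral in (ii). One minor point: tightness of $(\tilde\mu_n)$ is a standing hypothesis of the lemma applying to both parts, so your appeal to Prokhorov's theorem in (ii) is unnecessary.
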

\begin{proof}
We only show property (ii), as the first assertion can be treated similarly. By assumption,
$M:=\sup_{n\in\N}\{\|g_n\|_{\infty},\,\|g\|_{\infty}\}<+\infty$ and for each $\varepsilon>0$ there exists a radius
$r>0$ such that $\tilde\mu_n(\Rd\setminus B_r)\le \varepsilon$. We can thus estimate
\begin{align*}
\Big|\int_{\Rd}  g_n\,d\tilde\mu_n\! - \! \int_{\Rd} g\,d\tilde\mu\Big|
&\le\! \int_{B_r}\!\! |g_n\!-\!g|\,d\tilde\mu_n+\int_{\Rd\setminus B_r}\! |g_n\!-\!g|\,d\tilde\mu_n\!
   +\!\Big|\int_{\Rd}\! g\,d\tilde\mu_n\!-\!\int_{\Rd}\! g\,d\tilde\mu\Big|\\
& \le \sup_{x\in B_r} |g_n(x)\!-\!g(x)|+ 2M\varepsilon
   +  \Big|\int_{\Rd} g\,d\tilde\mu_n\!-\!\int_{\Rd} g\,d\tilde\mu\Big|.
\end{align*}
As $n\to+\infty$, the sum  in last line tends to $2M\varepsilon$, and (ii) follows.
\end{proof}

\section{Asymptotic behaviour of $G(t,s)$}
\label{sect:3}
Throughout this section, $\{\mu_t: t\ge 0\}$ is any tight evolution system of measures for $G(t,s)$,
extended to the whole $\R$ as in Remark \ref{rem:r}. We recall that by Theorem 5.4 in \cite{KunLorLun09Non} a tight evolution
system of measures for $G(t,s)$ does exist.
Corollary~3.2 of \cite{BogKryRoc01} yields that there exists a positive function
$\rho:\R^{1+d}\to\R$ such that $\rho(t,\cdot)$ is the density of $\mu_t$ with respect to the Lebesgue measure
for every $t\in\R$. In Corollary~\ref{cor:unique} we will see that actually there exists only
one tight evolution system of measures for $G(t,s)$.

To begin with, we use Hypothesis~\ref{hyp1}(i) to prove a lower bound on the densities $\rho(t,\cdot)$,
 which is crucial in our analysis.

\begin{lemma}\label{lem:rho}
For each $k\in\N$ there exists a number
$\delta_k>0$ such that $\rho(\tau,x)\ge\delta_k$ for all $\tau\ge0$ and $|x|\le k$.
\end{lemma}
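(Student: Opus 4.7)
The plan is to exploit a duality relation between the densities $\rho(s,\cdot)$ and $\rho(t,\cdot)$ for $t>s$, and then combine a uniform-in-$\tau$ lower bound on the free-space Green's function on a large ball (coming from Aronson's lower estimates for Dirichlet Green's functions \cite{Ar}) with the uniform mass concentration of the $\mu_\tau$ on balls guaranteed by tightness.

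First I rewrite the invariance identity \eqref{invariance-1} in terms of the density $\rho$ and the representation \eqref{eq:green}. Fubini's theorem and the arbitrariness of $f\in C_c(\R^d)$ yield the backward identity
$$\rho(s,y)=\int_{\R^d}g(t,s,x,y)\,\rho(t,x)\,dx,\qquad t>s,\ y\in\R^d,$$
and in particular, specializing to $s=\tau$ and $t=\tau+1$,
$$\rho(\tau,x)=\int_{\R^d}g(\tau+1,\tau,z,x)\,\rho(\tau+1,z)\,dz.$$

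Next, by tightness of $\{\mu_\tau:\tau\ge0\}$ I fix $R>0$ such that $\int_{B_R}\rho(\tau,z)\,dz=\mu_\tau(B_R)\ge 1/2$ for every $\tau\ge0$. I then fix $R'>R+k$ and denote by $g_{R'}$ the Green's function of the Dirichlet problem for $D_t-\cA(t)$ on $(\tau,\tau+1)\times B_{R'}$. The parabolic maximum principle gives $g(\tau+1,\tau,z,x)\ge g_{R'}(\tau+1,\tau,z,x)$ for $z,x\in B_{R'}$. Hypothesis~\ref{hyp1}(i)--(ii) ensures uniform ellipticity with constant $\eta_0$ together with global-in-time bounds for $q_{ij}\in C_b(\R_+\times B_{R'})$ and for the $C_b(\R_+;L^p(B_{R'}))$ norms of $b_i$ and $D_kq_{ij}$. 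Aronson's lower bound \cite{Ar} applied on each cylinder $(\tau,\tau+1)\times B_{R'}$ therefore delivers a constant $c_k>0$, independent of $\tau\ge0$, such that
$$g_{R'}(\tau+1,\tau,z,x)\ge c_k,\qquad |z|\le R,\ |x|\le k.$$
Restricting the above integral to $B_R$ and combining the two lower bounds,
$$\rho(\tau,x)\ge c_k\int_{B_R}\rho(\tau+1,z)\,dz\ge\frac{c_k}{2},$$
so the claim follows with $\delta_k:=c_k/2$.

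The main obstacle is the uniformity in $\tau$ of Aronson's Dirichlet estimate. This is precisely where the second part of Hypothesis~\ref{hyp1}(i) plays its crucial role: it provides global-in-time, local-in-space control of the coefficients on each ball $B_{R'}$, which is exactly what is needed to apply Aronson's bound with a constant that does not degenerate as $\tau\to+\infty$. Once this uniformity is available, the remaining ingredients---the duality formula, tightness, and the maximum principle---combine in a straightforward way.
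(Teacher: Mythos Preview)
Your proof is correct and follows essentially the same route as the paper's: both combine the duality relation $\rho(\tau,\cdot)=\int g(\tau+1,\tau,z,\cdot)\rho(\tau+1,z)\,dz$, the comparison $g\ge g_{R'}$ with a Dirichlet Green's function on a large ball, Aronson's lower bound (made uniform in $\tau$ via Hypothesis~\ref{hyp1}(i)), and tightness. The only cosmetic differences are that the paper works with $\int_B\rho(\tau,\cdot)\,dx$ over Borel sets rather than pointwise, and is slightly more explicit about rewriting $\cA(t)$ in divergence form and choosing the inner ball so that Aronson's constraint $t\le d(y,\partial\Omega')^2$ permits $t=1$.
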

\begin{proof}
Let $D_k=\{ (t,s,x,y)\in \R_+\times\R_+\times \overline{B_k}\times \overline{B_k}: t>s\}$ for every $k\in\N$.
By  $g_k:D_k\to [0,+\infty)$ we denote the Green's function of the parabolic problem
\begin{align*}
D_tu(t,x)&=\cA(t)u(t,x), \qquad (t,x)\in (s,+\infty)\times B_k,\\
u(t,x)&=0, \qquad (t,x)\in (s,+\infty)\times\partial B_k,\\
u(s,x)&=f(x), \qquad x\in B_k,
\end{align*}
as constructed in
Theorem~3.16 of \cite{Friedman} and its corollaries.  The proof of
Proposition~2.4 in \cite{KunLorLun09Non} yields $g\ge g_k$ on $D_k$ for each $k\in\N$,
where $g$ is Green's function in Proposition \ref{prop-cont-1}(i). Since the family $\{\mu_t: t\ge 0\}$ is
tight, there is a radius $k_0\in \N$ such that
$\mu_t(\overline{B_{k_0}})\ge 1/2$ for all $t\ge 0$. Throughout the proof, the integer
$k\ge k_0$ is arbitrary, but fixed. We claim that there exists a number $\delta_k>0$ such that
\begin{equation}\label{eq:delta-k}
g_{k+2}(\tau+1,\tau,x,y)\ge 2\delta_k\qquad  \text{for all \ \ } \tau\ge0, \ x,y\in \overline{B_k}.
\end{equation}

To prove the claim, we rewrite the operators ${\mathcal A}(t)$ in divergence form and apply Theorem~9(iii)
in \cite{Ar} with  $\Omega'=B_{k+1}$, $\Omega=B_{k+2}$ and $T=8$ to the operators
$\cL(t)=D_t-{\rm div}(\tilde Q(t+\tau,\cdot)\nabla_x)-\langle \tilde b(t+\tau),\nabla_x\rangle$ on
$(0,1]\times B_{k+2}$ for $\tau\ge0$. Here the  coefficients $\tilde q_{ij}=\tilde q_{ji}$ belong to $C_b(\R_+ \times \R^d)$
and satisfy $\tilde q_{ij}=q_{ij}$ on $\R_+\times B_{k+2}$ as well as   $\langle \tilde Q(t,x)\xi,\xi\rangle\ge\eta_0/2$
for all $i,j\in\{1,\ldots,d\}$, $(t,x)\in \R_+ \times \R^{d}$ and $\xi\in\partial B_1$.
The drift coefficients $\tilde b_i$ are continuous extensions of $b_i-\sum_{j=1}^dD_iq_{ij}$ to $\R^{1+d}$, such that on
$\tilde b_i=b_i-\sum_{j=1}^dD_iq_{ij}$ on  $\R_+\times B_{k+2}$ and $\tilde b_i=0$ on $\R_+\times \R\setminus B_{k+3}$ for $k\ge k_0$ and $i,j\in\{1,\ldots,d\}$.
By the uniqueness statement in Theorem~6 of \cite{Ar}, the map
$g_{k+2}(\cdot+\tau,\tau,\cdot,\cdot)$ is the Green's function of $\cL(t)$
on $(0,1]\times B_{k+2}$. Theorem~9(iii) of \cite{Ar} now implies that
\begin{equation}
g_{k+2}(t+\tau,\tau,x,y)\ge C_1t^{-d/2}\exp(-C_2t^{-1}|x-y|^2)
\label{eq:delta-kk}
\end{equation}
for all $x,y\in B_{k+1}$,   $t\in (0,\min\{8,(d(y,\partial B_{k+1}))^2\}]$ and $\tau\ge0$.
The constants $C_1$ and $C_2$ depend on $\eta_0$, $\sup_{t\ge0}\|q_{ij}(t,\cdot)\|_{L^\infty(B_{k+2})}$,
$\sup_{t\ge0}\|b_j(t,\cdot)\|_{L^p(B_{k+2})}$
and on $\sup_{t\ge0}\|D_i q_{ij}(t,\cdot)\|_{L^p(B_{k+2})}$ for all $i,j\in\{1,\ldots,d\}$ and some $p>d$.
(Note that these suprema are finite due to Hypotheses \ref{hyp1}(i).)
If $y\in B_k$, then $d(y,\partial B_{k+1})\ge 1$. Hence, we can take $t=1$ in \eqref{eq:delta-kk},
and \eqref{eq:delta-k} follows.

We can now complete the proof.  Take a  Borel set
$B\subset \overline{B_k}$ and some $\tau\ge0$.
 From \eqref{invariance-1}, \eqref{eq:green} and \eqref{eq:delta-k}, we deduce
\begin{align*}
\int_{B} \rho(\tau,x)\,dx
  &= \int_{\Rd} \int_{B} g( \tau+1,\tau,x,y)\rho(\tau+1,x)\, dy\, dx\\
 &\ge \int_{\overline{B_k}} \int_{B} g_{k+2}( \tau+1,\tau,x,y)\rho(\tau+1,x)\, dy\, dx\\
 &\ge  2\delta_k\lambda(B) \int_{\overline{B_k}} \rho(\tau+1,x)\, dx=2\delta_k\lambda(B)\mu_{\tau+1}(B_k)
  \ge \delta_k\lambda(B),
\end{align*}
where  $\lambda$ is the Lebesgue measure. This lower bound yields the assertion.
\end{proof}

We now establish our main result on the convergence of $G(t,s)$.

\begin{theorem} \label{thm:main}
Let $s\ge0$, $p\in[1,+\infty)$, and
$\{\mu_t:t\ge 0\}$ be a tight evolution system of measures for $G(t,s)$. The following assertions
are true.
\begin{enumerate}[\rm (i)]
\item
$\|G(t,s)f-m_s(f)\|_{L^p(\Rd,\mu_t)}$
tends to $0$  as $t\to +\infty$ for each $f\in L^p(\Rd,\mu_s)$.
\item
For each $f\in C_b(\Rd)$, $G(t,s)f$ tends to $m_s(f)$ locally uniformly in $\Rd$ as $t\to +\infty$.
\end{enumerate}
\end{theorem}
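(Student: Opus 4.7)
The plan is to prove (i) first and deduce (ii) from it. By the contractivity estimate \eqref{eq:contr} and density of $C_b^1(\R^d)$ in $L^p(\R^d,\mu_s)$, together with the fact that $\mu_t$ is a probability measure (so $L^q$-convergence for $q\ge 2$ implies $L^p$-convergence for $1\le p\le q$), one first reduces (i) to $f\in C_b^1(\R^d)$ with $m_s(f)=0$ and $p\ge 2$, so the task becomes $\|G(t,s)f\|_{L^p(\mu_t)}\to 0$. The key observation is that this quantity is \emph{monotone non-increasing in $t$}: from $G(t,s)=G(t,t_0)G(t_0,s)$ and \eqref{eq:contr} applied to $G(t,t_0)$, we have $\|G(t,s)f\|_{L^p(\mu_t)}\le\|G(t_0,s)f\|_{L^p(\mu_{t_0})}$ for $s\le t_0\le t$. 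Hence it suffices to exhibit \emph{one} sequence $t_n\to+\infty$ along which $\|G(t_n,s)f\|_{L^p(\mu_{t_n})}\to 0$.

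To produce such a sequence, I would apply Proposition~\ref{prop:gradT} to the test function $h(\tau,x):=\phi(\tau)(G(\tau,s)f)(x)$ (setting $h\equiv 0$ for $\tau<s$), where $\phi\in C_c^\infty(\R)$ is nonnegative, supported in $[s,s+1]$, and not identically zero. Since $G(\tau,s)$ is a contraction on $C_b(\R^d)$, $h\in L^p(\R^{1+d},\nu)$. The semigroup identity $G(s',s'-T)G(s'-T,s)f=G(s',s)f$ gives $(\cT(T)h)(s',x)=\phi(s'-T)(G(s',s)f)(x)$, so
\begin{equation*}
\|\nabla_x\cT_p(T)h\|_{L^p(\nu)}^p=\int_{s+T}^{s+T+1}\phi(t-T)^p\,\|\nabla_xG(t,s)f\|_{L^p(\mu_t)}^p\,dt.
\end{equation*}
By Proposition~\ref{prop:gradT} the left side vanishes as $T\to+\infty$, and a mean-value argument in $t$ combined with Lemma~\ref{lem:rho} then produces a sequence $t_n\to+\infty$ with $\|\nabla_xG(t_n,s)f\|_{L^p(B_R)}\to 0$ for every $R>0$.

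The functions $v_n:=G(t_n,s)f$ are uniformly bounded and, by \eqref{eq:grad} with $p=2$ (which yields $\|\nabla_xG(t,s)f\|_\infty\le C_1\|f\|_\infty$ for $t-s\ge 1$), uniformly Lipschitz, so Arzel\`a--Ascoli yields a subsequence converging locally uniformly to some $v^*\in W^{1,\infty}(\R^d)$. The local $L^p$-gradient decay forces $\nabla v^*\equiv 0$ distributionally, so $v^*$ is constant. Extracting a further subsequence along which $\mu_{t_n}\to\mu^*$ weak$^*$ (tightness of $\{\mu_t\}$ together with Prokhorov's theorem), Lemma~\ref{lem:conv}(ii) and the invariance $\int v_n\,d\mu_{t_n}=\int f\,d\mu_s=0$ identify the constant as $0$. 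Lemma~\ref{lem:conv}(i) applied to $|v_n|^p$ then yields $\|v_n\|_{L^p(\mu_{t_n})}\to 0$, and the monotonicity lifts this to the full limit, proving (i). For (ii), any sequence $t_n\to+\infty$ has by Arzel\`a--Ascoli a locally uniformly convergent subsequence $G(t_n,s)f\to w$; approximating $f\in C_b(\R^d)$ uniformly by $f_k\in C_b^1(\R^d)$ and applying (i) to each $f_k$ together with Lemma~\ref{lem:rho} (to transfer $L^p(\mu_{t_n})$-convergence to $L^p_{\rm loc}(dx)$-convergence), one identifies $w$ with $m_s(f_k)$ up to $O(\|f-f_k\|_\infty)$ on every ball, and $k\to+\infty$ gives $w\equiv m_s(f)$. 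The main obstacle I anticipate is finding the right test function $h$ in Proposition~\ref{prop:gradT} so that $\cT(T)h$ directly involves $\nabla_xG(t,s)f$ with fixed initial datum at time $s$; the product ansatz $h=\phi\cdot G(\cdot,s)f$ resolves this by letting the semigroup law collapse the double appearance of $G$, after which the monotonicity in $t$ makes the passage from sequence convergence to full convergence immediate.
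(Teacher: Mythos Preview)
Your argument for (i) is correct and follows a route that is close to, but in one respect cleaner than, the paper's. Both proofs rely on the monotonicity of $t\mapsto\|G(t,s)f-m_s(f)\|_{L^p(\mu_t)}$, on Proposition~\ref{prop:gradT}, Lemma~\ref{lem:rho}, a compactness argument, and Lemma~\ref{lem:conv}. The difference lies in the test function fed into Proposition~\ref{prop:gradT}: the paper uses $h(s',x)=\alpha_m(s')f(x)$, which after the semigroup action produces an integral over \emph{initial} times and hence gives the gradient decay only for a.e.\ $s$; a separate bootstrap via $G(t,s)=G(t,s_*)G(s_*,s)$ is then needed to cover the exceptional null set. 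Your choice $h(\tau,x)=\phi(\tau)(G(\tau,s)f)(x)$ exploits the evolution law to collapse $G(s',s'-T)G(s'-T,s)$ into $G(s',s)$, so the fixed initial time $s$ survives and no null set appears. Minor variants: the paper extracts locally uniform convergence via the compact embedding $W^{1,p}(B_k)\hookrightarrow C(\overline{B_k})$ for $p>d$, whereas your uniform Lipschitz bound from \eqref{eq:grad} gives Arzel\`a--Ascoli directly; and the paper identifies the limiting constant using only Lemma~\ref{lem:conv}(i) and invariance, without passing through Prokhorov and Lemma~\ref{lem:conv}(ii).

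There is, however, a genuine (if easily repaired) error in your sketch for (ii): a general $f\in C_b(\R^d)$ cannot be approximated \emph{uniformly} by functions in $C_b^1(\R^d)$, since the uniform closure of $C_b^1(\R^d)$ is contained in the bounded uniformly continuous functions. The approximation step is in fact unnecessary: because $C_b(\R^d)\subset L^p(\R^d,\mu_s)$, part (i) applies directly to $f$; Lemma~\ref{lem:rho} then transfers the $L^p(\mu_t)$-convergence to $L^p(B_R)$-convergence, and your Arzel\`a--Ascoli step identifies every locally uniform subsequential limit with $m_s(f)$. This is precisely how the paper handles (ii).
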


\begin{proof}
(i) First of all, we observe that it suffices to prove the assertion for $s\in \R_+\setminus {\mathcal N}$, where
${\mathcal N}$ is a null set. Indeed, if $s\in {\mathcal N}$, we fix any $s_*\in \R_+\setminus {\mathcal N}$ such that $s_*>s$.
If $f\in L^p(\R^d,\mu_s)$, then the function $g=G(s_*,s)f$ belongs to $L^p(\Rd, \mu_{s_*})$ and since $G(t,s)f=G(t,s_*)g$ and
$m_s(f)=m_{s_*}(g)$,
\begin{eqnarray*}
\lim_{t\to +\infty}\|G(t,s)f-m_s(f)\|_{L^p(\Rd,\mu_t)}=\lim_{t\to +\infty}\|G(t,s_*)g-m_{s_*}(g)\|_{L^p(\Rd,\mu_{t})}=0.
\end{eqnarray*}
Moreover, in view of \eqref{eq:contr}, it suffices to prove assertion for $f\in C^{\infty}_c(\R^d)$.
Finally, we can assume that $p>d$, since for $p\in [1,d]$ H\"older's inequality shows that
$\|G(t,s)f-m_s(f)\|_{L^p(\Rd,\mu_t)}\le \|G(t,s)f-m_s(f)\|_{L^{2d}(\Rd,\mu_t)}$ for all
$f\in C^{\infty}_c(\Rd)$ and $t>s$. Thus, we let $f\in C^{\infty}_c(\R^d)$ and $p>d$.

Fix a positive sequence $(t_n)$ diverging to $+\infty$, and
functions $\alpha_m$ in  $C_c^{\infty}(\R)$  such that
$0\le \alpha_m\le 1$ in $\Rd$ and $\alpha_m=1$  on $[-m,m]$ for each $m\in\N$.
We extend again $G(t,s)$ and $\mu_t$ to $\R$ as in Remark~\ref{rem:r}.
Proposition \ref{prop:gradT} implies that
\[
\int_{\R}\big\| \rho(s+t_n,\cdot) \alpha_m(s)^p \,|\nabla_xG(s+t_n,s)f|^p\big\|_{L^1(\Rd)}\,ds
 = \|\,|\nabla_x\cT(t_n)(\alpha_mf)|\, \|_{L^p(\R^{1+d},\nu)}^p
\]
tends to $0$ as $n\to+\infty$ for each $m\in\N$. There thus exist null sets ${\mathcal N}_m\subset [-m,m]$ and
subsequences $(t_n^{(m)})$ diverging to $+\infty$, with $t_k^{(m+1)}\in (t_n^{(m)})_n$ for all $k,m\in\N$, such that
\begin{eqnarray*}
\lim_{n\to +\infty}\int_{\Rd} \rho(s+t_n^{(m)},\cdot) \,|\nabla_xG(s+t_n^{(m)},s)f|^p \,dx=0
\end{eqnarray*}
for all $m\in \N$ and  $s\in\R_+\setminus {\mathcal N}_m$.
 We can thus determine a diagonal sequence $(t_{n_j})$ such that
\begin{equation}\label{eq:conv-grad-j}
\lim_{j\to +\infty}\int_{\Rd} \rho(s+t_{n_j},\cdot)|\nabla_xG(s+t_{n_j},s)f|^p \,dx
\end{equation}
for each $s\in\R_+\setminus{\mathcal N}$, where ${\mathcal N}=\bigcup_{m\in\N}{\mathcal N}_m$ is a null set.

Fix $s\in \R_+\setminus{\mathcal N}$. We use  Lemma~\ref{lem:rho} with $\tau=s+t_n$.
For every $k\in\N$,  it provides a number $\delta_k>0$ such that
 $\rho(s+t_n,x)\ge\delta_k$ for $n\in\N$  and $|x|\le k$. This lower bound and
 \eqref{eq:conv-grad-j} yield
 \begin{equation}\label{eq:conv-grad-k-j}
\lim_{j\to +\infty}\|\,|\nabla_xG(s+t_{n_j},s)f|\,\|_{L^p(B_k)}=0
\end{equation}
for each $k\in\N$. Observing that $\|G(s+t_{n_j},s)f\|_{L^p(B_k)}\le
c_k^{1/p}\|G(s+t_{n_j},s)f\|_{\infty}\le \|f\|_{\infty}$ for some positive constant $c_k$ (see Proposition~\ref{prop-cont-1}(i)),
we then find constants $\tilde c_k>0$ such that
$\|G(s+t_{n_j},s)f\|_{W^{1,p}(B_k)} \le \tilde c_k$
for all $j\in \N$. Since $p>d$, $W^{1,p}(B_k)$ is compactly embedded in $C(\overline{B_k})$. By a diagonal
argument, there exists a function $g(s,\cdot)\in C(\Rd)$ such that
$G(s+t_{n_j},s)f$ converges to $g(s,\cdot)$ locally uniformly in $\Rd$, up to a subsequence.
In particular, $\|g(s,\cdot)\|_{\infty} \le \|f\|_{\infty}$.

On the other hand, $\nabla_xG(s+t_{n_j},s)f$ tends to 0 in $L^p(B_R)^d$ as $n\to+\infty$, for every $R>0$,
due to \eqref{eq:conv-grad-k-j}. The weak gradient $\nabla_xg(s,\cdot)$ thus vanishes, and hence
$g(s)$ is constant in $x$. To prove that this constant is $m_s(f)$, it suffices to observe that
\begin{eqnarray*}
m_s(f)-g(s) = \int_{\Rd} (f -g(s))\,d\mu_{s}= \int_{\Rd} G(s+t_{n_j},s)(f -g(s))\,d\mu_{s+t_{n_j}}
\end{eqnarray*}
and use Lemma \ref{lem:conv}(i) with $\tilde\mu_n=\mu_{s+t_{n_j}}$. As a result, $g(s)=m_s(f)$ and
$G(s+t_{n_j},s)f$ tends to $m_s(f)$ locally uniformly
as $j\to+\infty$, for  $s\in \R_+\setminus{\mathcal N}$. Since $G(s+t_{n_j},s)m_s(f)\one= m_s(f)\one$
by  Proposition~\ref{prop-cont-1}(i), from Lemma~\ref{lem:conv}(i) we infer
\begin{eqnarray*}
\lim_{j\to+\infty}\|G(s+t_{n_j},s)(f-m_s(f))\|_{L^p(\Rd,\mu_{s+t_{n_j}})}=0.
\end{eqnarray*}
Finally, the function $h=\|G(\cdot,s)f-m_s(f)\|_{L^p(\Rd,\mu_t)}$ is decreasing in $[s,+\infty)$ since
\begin{align*}
h(t_2)&=\|G(t_2,s)(f-m_s(f))\|_{L^p(\Rd,\mu_{t_2})}
  =\|G(t_2,t_1)G(t_1,s)(f-m_s(f))\|_{L^p(\Rd,\mu_{t_2})}\\
&\le \|G(t_1,s)(f-m_s(f))\|_{L^p(\Rd,\mu_{t_1})}=h(t_1)
\end{align*}
for $s\le t_1<t_2$, where we have used property (i) in Proposition \ref{prop-cont-1} and \eqref{eq:contr}.
We conclude that $\lim_{t\to +\infty}\|G(t,s)f-m_s(f)\|_{L^p(\Rd,\mu_t)}=0$.

\smallskip
(ii) Fix $f\in C_b(\Rd)$, $s\in\R_+$, $R>0$ and $p>d$. Since $C_b(\Rd)\subset L^p(\Rd,\mu_s)$, $\|G(t+s,s)f-m_s(f)\|_{L^p(\Rd,\mu_{t+s})}$
tends to $0$ as $t\to +\infty$, by the first part of the proof.
Taking Lemma \ref{lem:rho} into account, we can estimate
\begin{eqnarray*}
\|G(t+s,s)f-m_s(f)\|_{L^p(B_R)}\le\delta_R^{-1/p}\|G(t+s,s)f-m_s(f)\|_{L^p(\Rd,\mu_{t+s})}
\end{eqnarray*}
for all $t\ge0$ and some positive constant $\delta_R$. Hence,
$\|G(t+s,s)f-m_s(f)\|_{L^p(B_R)}$ tends to $0$ as $t\to +\infty$. In particular, there exists a positive constant $C_1=C_1(R)$ such that
\begin{equation}
\|G(t+s,s)f-m_s(f)\|_{L^p(B_R)}\le C_1
\label{1}
\end{equation}
for all $t\ge0$. Moreover, the gradient estimate \eqref{eq:grad} implies that
\begin{align}
\|\nabla_xG(t+s,s)f\|_{L^p(B_R)}\le C_2\|f\|_{\infty}
\label{2}
\end{align}
for all $t\ge 1$ and some positive constant $C_2=C_2(R)$. From \eqref{1} and \eqref{2} we deduce that
the family of functions $\{G(t+s,s)f-m_s(f): t\ge 1\}$ is bounded in $W^{1,p}(B_R)$ and, consequently, in $C^{\beta}(B_R)$
for some $\beta\in (0,1)$ since $p>d$.
By the Arzel\`a-Ascoli theorem, from any sequence $(t_n)$ diverging to $+\infty$ we can extract a subsequence $(t_{n_k})$ such that
$G(t_{n_k}+s,s)f-m_s(f)$ converges uniformly in $B_R$ to zero as $k\to +\infty$, since it tends to zero in
$L^p(B_R)$. This shows that $G(t+s,s)f-m_s(f)$ tends to $0$, uniformly in $B_R$, as $t\to +\infty$.
\end{proof}

\begin{cor}\label{cor:unique}
$G(t,s)$ has exactly one tight evolution system of measures.
\end{cor}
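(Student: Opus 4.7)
The plan is essentially immediate from Theorem~\ref{thm:main}(ii). Suppose $\{\mu_t : t\ge 0\}$ and $\{\tilde\mu_t : t\ge 0\}$ are two tight evolution systems of measures for $G(t,s)$. Fix $s\ge 0$ and $f\in C_b(\R^d)$, and set $m_s(f)=\int_{\R^d} f\,d\mu_s$ and $\tilde m_s(f)=\int_{\R^d} f\,d\tilde\mu_s$.

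I would first apply Theorem~\ref{thm:main}(ii) to the system $\{\mu_t\}$ to conclude that $G(t,s)f \to m_s(f)$ locally uniformly on $\R^d$ as $t\to +\infty$. Applying the same theorem to $\{\tilde\mu_t\}$, the same function $(G(t,s)f)(x)$ must converge locally uniformly to $\tilde m_s(f)$. Since both limits are attained at every point $x\in\R^d$ and pointwise limits of real sequences are unique, the constants coincide: $m_s(f)=\tilde m_s(f)$ for every $f\in C_b(\R^d)$.

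Since $C_b(\R^d)$ is measure-determining on the Borel $\sigma$-algebra of $\R^d$, this equality of averages forces $\mu_s=\tilde\mu_s$ for every $s\ge 0$, which gives uniqueness.

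The argument is short and contains no real obstacle, because all the work has already been done in Theorem~\ref{thm:main}: the only point to double-check is that Theorem~\ref{thm:main}(ii) is applicable with an arbitrary tight evolution system (it is, as the statement is explicitly formulated for \emph{any} such system), and that local uniform convergence on $\R^d$ implies pointwise convergence at some fixed point, which is trivial.
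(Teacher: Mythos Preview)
Your proof is correct and follows essentially the same approach as the paper: both apply Theorem~\ref{thm:main}(ii) to each of the two tight evolution systems, obtain that $G(t,s)f$ converges locally uniformly to both constants $m_s(f)$ and $\tilde m_s(f)$, and conclude by uniqueness of limits that the measures coincide. Your remark that Theorem~\ref{thm:main} is formulated for an arbitrary tight evolution system is exactly the point that makes the argument work.
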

\begin{proof}
Let  $\{\mu^{(1)}_t:t\ge0\}$  and $\{\mu^{(2)}_t:t\ge0\}$ be two evolution systems of measures
with corresponding means $m^{(i)}_t$. Fix $s\in\R_+$ and $f\in C_b(\Rd)$. Then, Theorem \ref{thm:main}(ii)
shows that, as $t\to +\infty$, $G(t,s)f$ converges both to $m_s^{(1)}(f)$ and $m_s^{(1)}(f)$, locally uniformly in $\Rd$.
Hence, $m_s^{(1)}(f)=m_s^{(2)}(f)$ and, consequently, $\mu_s^{(1)}=\mu_s^{(2)}$.
\end{proof}

\section{Converging coefficients}
In this section, we consider coefficients that converge as $t\to+\infty$ as described in the next
additional hypothesis.

\begin{hypo}
\label{ass:conv}
The coefficients $q_{i,j}$ and $b_i$ belong  to $C^{\alpha/2,\alpha}_b(\R_+\times B_R)$ for all $i,j\in \{1,\cdots,d\}$ and $R>0$
and $Q(t,\cdot)$ and $b(t,\cdot)$ converge pointwise to maps $Q_{\infty}:\R^d\to\R^{d^2}$ and
$b_{\infty}:\R^d\to\R^d$, respectively, as $t\to+\infty$.
\end{hypo}

\begin{remark}\label{rem:conv}
Hypotheses~\ref{hyp1} and \ref{ass:conv} imply that $Q_{\infty}\in C^\alpha_{\rm loc}(\R^d;\R^{d^2})$ and
$b_{\infty}\in C^\alpha_{\rm loc}(\R^d,\R^d)$ satisfy the $t$--independent analogues of  Hypotheses~\ref{hyp1}(ii) and (iii).
The evolution operator generated by $\cA_{\infty}$ is a semigroup $\{T(t):t\ge0\}$
which admits a single invariant measure $\mu_{\infty}$ having a
density  $\rho_{\infty}>0$ with respect to the Lebesgue measure.
(See e.g. Theorems~8.1.15  and 8.1.20 of \cite{BerLorbook} or \cite{MPW}.)
\end{remark}

As in Section \ref{sect:3}, $\{\mu_t: t\ge0\}$ is any tight evolution system of measures with densities $\rho(t,\cdot)$.
Under the additional Hypothesis \ref{ass:conv}, we  show that the densities $\rho(t,\cdot)$ converge to $\rho_\infty$
and we  derive a variant of Theorem~\ref{thm:main}.

\begin{prop}\label{prop:conv} The densities  $\rho(t,\cdot)$ converge to $\rho_{\infty}$ locally uniformly in $\Rd$ and
in $L^1(\Rd)$ as $t\to +\infty$.
\end{prop}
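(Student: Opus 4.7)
The plan is to combine compactness of the densities with the uniqueness Corollary~\ref{cor:unique} applied to the limit semigroup $T(\cdot)$. By the regularity results of \cite{BogKryRoc01} (already invoked in Remark~\ref{rem:conv}) the densities $\rho(t,\cdot)$ satisfy uniform local H\"older bounds, so for any sequence $t_n\to+\infty$ the shifted densities $\rho_n(s,x):=\rho(t_n+s,x)$ form a precompact family in $C_{\rm loc}(\R\times\R^d)$. Passing to a subsequence, $\rho_n\to\bar\rho$ locally uniformly for some nonnegative continuous $\bar\rho$, and uniform tightness of $\{\mu_t:t\in\R\}$ together with Lemma~\ref{lem:conv}(ii) applied to $\one$ shows that $\tilde\mu_s:=\bar\rho(s,\cdot)\,dx$ is a probability measure for each $s$ and that $\mu_{t_n+s}\to\tilde\mu_s$ weakly$^*$.

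Next I would establish the stability
\begin{equation*}
G(t_n+s+t,\,t_n+s)f\longrightarrow T(t)f \qquad \text{locally uniformly on }\R^d
\end{equation*}
for every fixed $t\ge 0$, $s\in\R$ and $f\in C_b(\R^d)$. Hypothesis~\ref{ass:conv} makes the shifted coefficients $q_{ij}(t_n+\cdot,\cdot)$ and $b_i(t_n+\cdot,\cdot)$ equi-H\"older on compact subsets of $\R\times\R^d$; combined with their pointwise convergence to $Q_\infty$ and $b_\infty$, Arzel\`a--Ascoli upgrades this to locally uniform convergence. Interior parabolic Schauder estimates applied to $u_n(r,x):=(G(t_n+s+r,\,t_n+s)f)(x)$ then yield a uniform $C^{1+\alpha/2,\,2+\alpha}_{\rm loc}$ bound; any cluster point is a bounded classical solution of $D_r u=\cA_\infty u$ with $u(0,\cdot)=f$, which by uniqueness equals $T(\cdot)f$.

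To identify $\bar\rho$, I would pass to the limit in the invariance relation
\begin{equation*}
\int_{\R^d} G(t_n+s+t,\,t_n+s)f\,d\mu_{t_n+s+t} = \int_{\R^d} f\,d\mu_{t_n+s}, \qquad f\in C_b(\R^d),
\end{equation*}
using Lemma~\ref{lem:conv}(ii) to obtain $\int T(t)f\,d\tilde\mu_{s+t}=\int f\,d\tilde\mu_s$. Hence $\{\tilde\mu_s\}_{s\in\R}$ is a tight evolution system of measures for $T(\cdot)$. Since the constant family $\{\mu_\infty\}_{s\in\R}$ is another such system, Corollary~\ref{cor:unique} applied to the autonomous operator $\cA_\infty$ forces $\tilde\mu_s=\mu_\infty$, and so $\bar\rho(s,\cdot)=\rho_\infty$ for every $s$. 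Since every subsequence of $\{\rho(t,\cdot)\}$ thus admits a further subsequence converging to $\rho_\infty$ locally uniformly, the full family does, and Scheff\'e's lemma (both sides being probability densities) promotes the a.e.\ convergence to convergence in $L^1(\R^d)$.

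The hard part is the stability step: one has to turn the mere pointwise convergence of the coefficients into convergence of the full evolution operators $G(t_n+s+t,t_n+s)$ to $T(t)$. The key mechanisms are the H\"older equicontinuity from Hypothesis~\ref{ass:conv} (which upgrades pointwise to locally uniform convergence on compacts) and uniqueness of bounded classical solutions to the autonomous Cauchy problem for $\cA_\infty$; without the equicontinuity the Arzel\`a--Ascoli step collapses, and without uniqueness one cannot pin down the cluster point.
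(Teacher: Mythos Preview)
Your approach is correct in outline but takes a genuinely different route from the paper's. The paper does not prove the weak$^*$ convergence $\mu_t\to\mu_\infty$ from scratch; it simply cites Theorem~6.2 of \cite{ALL} for this, then combines the already-known weak$^*$ limit with the uniform local H\"older bound on $\rho$ from \cite{BogKryRoc01} and Arzel\`a--Ascoli to identify any subsequential limit of $\rho(t,\cdot)$ with $\rho_\infty$ directly by testing against $C^\infty_c(\Rd)$. Your route instead re-derives the weak$^*$ convergence through the stability step $G(t_n+s+t,t_n+s)f\to T(t)f$ and passage to the limit in the invariance relation; this is more self-contained but considerably longer, and the stability step (even with the equi-H\"older and uniqueness ingredients you name) requires some care near $r=0$ to preserve the initial datum for merely $C_b$ data. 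For the $L^1$ part the paper cuts off the tails by tightness and uses the local uniform convergence on the remaining ball, whereas your appeal to Scheff\'e is equally valid.

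One point needs patching. You invoke Corollary~\ref{cor:unique} for the autonomous operator $\cA_\infty$, but that corollary relies on all of Hypotheses~\ref{hyp1}, while Remark~\ref{rem:conv} only asserts that $Q_\infty$ and $b_\infty$ satisfy the $t$--independent analogues of (ii) and (iii); in particular the limits are only shown to lie in $C^\alpha_{\rm loc}$, not $C^{1+\alpha}_{\rm loc}$ as (i) requires, and (iv) is not verified for $\cA_\infty$. You can bypass this cleanly: Remark~\ref{rem:conv} already records that $T(\cdot)$ has a unique invariant measure, and the autonomous ergodic theorem (same references) gives $T(t)f\to m_\infty(f)$ locally uniformly for $f\in C_b(\Rd)$. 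Then tightness of $\{\tilde\mu_{s+t}:t\ge0\}$ and Lemma~\ref{lem:conv}(i) yield
\[
\int_{\Rd}f\,d\tilde\mu_s=\int_{\Rd}T(t)f\,d\tilde\mu_{s+t}\longrightarrow m_\infty(f)\qquad(t\to+\infty),
\]
so $\tilde\mu_s=\mu_\infty$ for every $s$, without appealing to Corollary~\ref{cor:unique}.
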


\begin{proof}
 We first prove local uniform convergence. It suffices to show that every sequence $(s_n)$ diverging to $+\infty$ admits
a subsequence such that $\rho(s_{n_j},\cdot)$ converges to $\rho_{\infty}$ locally uniformly on $\R^d$ as $j\to+\infty$.
As  in the proof of Theorem~6.2 of \cite{ALL} we see that
$\mu_t$ weakly$^*$ converges to $\mu_{\infty}$ as $t\to +\infty$.
Because of Proposition~\ref{prop:properties}(ii) and Hypothesis~\ref{hyp1}(i),
Corollary~3.9 \cite{BogKryRoc01} yields that
$\rho$ is contained in $C^{\beta}((s,s+1)\times B_R)$ for every $s,R>0$  and some $\beta>0$.
The proofs given there also yield that
the norms of $\rho$ in these spaces are bounded by a constant $C=C(R)$  independent of $s$.
See also  \cite{Krylov}. As a result, $\rho$ belongs to $C^{\beta}_b([0,+\infty)\times B_R)$ for every $R>0$.
The Arzel\`a-Ascoli theorem now provides a sequence $(t_n)$ diverging to $+\infty$
such that the density $\rho(t_n,\cdot)$ of the measure $\mu_{t_n}$ converges to
a function $g\in C(\Rd)$ locally uniformly in $\Rd$ as $n\to +\infty$.
The weak$^*$ convergence of $\mu_t$ to $\mu_{\infty}$ thus yields
\[
\int_{\R^d}f \rho_{\infty}\, dx=\int_{\Rd}f \,d\mu_{\infty} = \lim_{t_k\to +\infty}\int_{\R^d}f \,d\mu_{t_k}
  =\lim_{t_k\to +\infty}\int_{\R^d}f\rho(t_k,\cdot)\,dx   = \int_{\R^d}f g\,dx
\]
for every $f\in C^{\infty}_c(\R^d)$. Hence, $\rho_{\infty}= g$ and the local uniform convergence is shown.

 To prove the $L^1$-convergence, let $\ep>0$. By the tightness, there is a radius $R>0$ such that
$\mu_t(\Rd\setminus B_R),~\mu_{\infty}(\Rd\setminus B_R)\le \ep$ for all $t\ge0$.
>From the first part of the proof we deduce
\begin{align*}
\limsup_{t\to +\infty}\|\rho(t,\cdot)\!-\!\rho_{\infty}\|_{L^1(\R^d)}
&= \limsup_{t\to +\infty}\big[\|\rho(t,\cdot)\!-\!\rho_{\infty}\|_{L^1(B_R)}\!+\!\|\rho(t,\cdot)\!-\!\rho_{\infty}\|_{L^1(\Rd\setminus B_R)}\big]\\
&\le \limsup_{t\to +\infty}\mu_t(\Rd\setminus B_R)+\mu_{\infty}(\R^d\setminus B_R)\\
&\le 2\ep. \qedhere
\end{align*}
\end{proof}

\begin{theorem} \label{thm:conv}
Let $s\ge0$, $p\in[1,+\infty)$
and $f\in C_b(\Rd)$. Then, $G(t,s)f$ tends to $m_s(f)$ in $L^p(\Rd,\mu_{\infty})$
 as $t\to +\infty$.
\end{theorem}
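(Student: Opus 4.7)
The plan is to reduce the desired convergence in $L^p(\R^d,\mu_\infty)$ to the already-established convergence in $L^p(\R^d,\mu_t)$ from Theorem~\ref{thm:main}(i), using Proposition~\ref{prop:conv} as the bridge. The key observation is that $G(t,s)f-m_s(f)$ is uniformly bounded, so replacing the measure $\mu_t$ by $\mu_\infty$ (equivalently, replacing the density $\rho(t,\cdot)$ by $\rho_\infty$) costs only a term proportional to $\|\rho(t,\cdot)-\rho_\infty\|_{L^1(\R^d)}$.

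More precisely, I would first note that since $G(t,s)$ is a contraction on $C_b(\R^d)$ and $G(t,s)\one=\one$, we have $|m_s(f)|\le\|f\|_\infty$ and $|G(t,s)f-m_s(f)|\le 2\|f\|_\infty$ on $\R^d$. Writing the integrals against the densities, this yields the pointwise bound
\begin{equation*}
\bigl||G(t,s)f-m_s(f)|^p\rho_\infty - |G(t,s)f-m_s(f)|^p\rho(t,\cdot)\bigr|
\le (2\|f\|_\infty)^p\,|\rho_\infty-\rho(t,\cdot)|
\end{equation*}
on $\R^d$. Integrating and invoking Proposition~\ref{prop:conv}, the difference
$\int_{\R^d}|G(t,s)f-m_s(f)|^p\,d\mu_\infty - \int_{\R^d}|G(t,s)f-m_s(f)|^p\,d\mu_t$
tends to $0$ as $t\to+\infty$.

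It then remains to show that $\int_{\R^d}|G(t,s)f-m_s(f)|^p\,d\mu_t\to 0$. Since $\mu_s$ is a probability measure and $f\in C_b(\R^d)$, we have $f\in L^p(\R^d,\mu_s)$, so Theorem~\ref{thm:main}(i) applies and gives exactly this convergence. Combining the two facts yields $\|G(t,s)f-m_s(f)\|_{L^p(\R^d,\mu_\infty)}\to 0$.

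There is no serious obstacle here; the proof is essentially a two-line combination of Proposition~\ref{prop:conv} (for the density swap) and Theorem~\ref{thm:main}(i) (for the convergence in the time-varying $L^p$-norm), enabled by the uniform $L^\infty$-bound on $G(t,s)f-m_s(f)$. The only point one must verify explicitly is that $\mu_s$ being a probability measure puts $C_b(\R^d)$ inside $L^p(\R^d,\mu_s)$, which is immediate.
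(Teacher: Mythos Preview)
Your proof is correct and essentially identical to the paper's own argument: the paper also splits $\|G(t,s)f-m_s(f)\|_{L^p(\Rd,\mu_\infty)}^p$ into the term $\int_{\Rd}|\rho_\infty-\rho(t,\cdot)|\,|G(t,s)(f-m_s(f))|^p\,dx$ (bounded by $2^p\|f\|_\infty^p\|\rho_\infty-\rho(t,\cdot)\|_{L^1(\Rd)}$ and handled by Proposition~\ref{prop:conv}) plus $\|G(t,s)f-m_s(f)\|_{L^p(\Rd,\mu_t)}^p$ (handled by Theorem~\ref{thm:main}(i)).
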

\begin{proof}
The result follows from Proposition~\ref{prop:conv}, Theorem~\ref{thm:main}(i) and the estimates
\begin{align*}
\|G(t,s)&f-m_s(f)\|_{L^p(\Rd,\mu_{\infty})}^p \\
&\le \int_{\Rd} |\rho_{\infty}-\rho(t,\cdot)|\, |G(t,s)(f-m_s(f))|^p\,dx  + \|G(t,s)f-m_s(f)\|_{L^p(\Rd,\mu_t)}^p\\
&\le 2^p\|f\|_{\infty}^p \,\|\rho_{\infty}-\rho(t,\cdot)\|_{L^1(\Rd)} + \|G(t,s)f-m_s(f)\|_{L^p(\Rd,\mu_t)}^p. \qedhere
\end{align*}
\end{proof}

\section{An example}
\label{example}

We consider the family of operators $\cA(t)$ defined on smooth functions $\varphi$ by
\begin{eqnarray*}
\cA(t)\varphi=(1+|x|^2)^{\gamma}\sum_{i,j=1}^dq_{ij}^{(0)}(t,x)D_{ij}\varphi-b^{(0)}(t)(1+|x|^2)^r\langle x,\nabla_x\varphi\rangle
\end{eqnarray*}
for  $t\ge0$ and $x\in\Rd$, under the following assumptions.
\begin{enumerate}[\rm (i)]
\item
$q_{ij}^{(0)}=q_{ji}^{(0)}$  belong to $C^{\alpha/2,1+\alpha}_{\rm loc}(\R_+\times \R^d)\cap C_b(\R_+;C^1_b(\R^{d}))$
for some $\alpha\in(0,1)$ and for all $R>0$ and
$i,j\in\{1,\ldots,d\}$. Moreover, $\langle Q^{(0)}(t,x)\xi,\xi\rangle\ge\eta_0$ in $\R^{1+d}$ for some positive constant $\eta_0$ and every $\xi\in\partial B_1$;
\item
The function $b\in C^{\alpha/2}_{\rm loc}(\R_+)\cap  C_b(\R_+)$ satisfies $\beta:=\inf_{t\ge 0}b^{(0)}(t)>0$.
\item
$r>\gamma-1$ and $\gamma\in\R$.
\end{enumerate}
Let $\delta \in(0, 2(r+1-\gamma))$. Then every smooth and positive function $V:\R^d\to\R$ with
$V(x)=e^{|x |^{\delta}}$ for $x\in \R^d\setminus B_1$  satisfies Hypothesis~\ref{hyp1}(iii)
for the above operator. Indeed,  we have
\begin{align*}
(\mathcal{A}(t)V)(x)&=  \delta V(x)|x|^{\delta}\big[ (\delta |x|^{\delta -4}+ (\delta-2) |x|^{-4} )(1+|x|^2)^{\gamma}\langle Q^{(0)}(t,x)x,x\rangle\\
& \quad + \textrm{Tr}(Q^{(0)}(t,x))(1+|x|^2)^{\gamma}|x|^{-2}  -b^{(0)}(t)(1+|x|^2)^r\big]\\
&\le\delta V(x)|x|^\delta h(x),
\end{align*}
for $t\ge 0$ and $|x|\ge1$, where $h(x)= c\,|x|^{\delta-2} (1+|x|^2)^{\gamma}- \beta (1+|x|^2)^r$ tends to $-\infty$ as $|x|\to +\infty$
and $c>0$ is a constant depending on the bounds of $Q^{0}$.
One easily checks   the other conditions in Hypothesis \ref{hyp1}
 and the additional condition in Theorem \ref{thm:main}(ii).
Finally,  Hypothesis \ref{ass:conv} is satisfied
if $q_{ij}\in C^{\alpha/2,\alpha}_b(\R_+\times B_R)$ for all
$R>0$, $b\in C^{\alpha/2}(\R_+)$, and $Q^{(0)}(t,\cdot)$ and $b(t)$ converge to  $Q^{(0)}_{\infty}$ and $b_{\infty}$,
respectively, as $t\to +\infty$.

\end{document}